\newcolumntype{C}{>{$}c<{$}}
\DeclareFontFamily{OMX}{MnSymbolE}{}
\DeclareSymbolFont{MnLargeSymbols}{OMX}{MnSymbolE}{m}{n}
\DeclareFontShape{OMX}{MnSymbolE}{m}{n}{
    <-6>  MnSymbolE5
   <6-7>  MnSymbolE6
   <7-8>  MnSymbolE7
   <8-9>  MnSymbolE8
   <9-10> MnSymbolE9
  <10-12> MnSymbolE10
  <12->   MnSymbolE12
}{}
\DeclareFontShape{OMX}{MnSymbolE}{b}{n}{
    <-6>  MnSymbolE-Bold5
   <6-7>  MnSymbolE-Bold6
   <7-8>  MnSymbolE-Bold7
   <8-9>  MnSymbolE-Bold8
   <9-10> MnSymbolE-Bold9
  <10-12> MnSymbolE-Bold10
  <12->   MnSymbolE-Bold12
}{}
\let\llangle\@undefined
\let\rrangle\@undefined
\DeclareMathDelimiter{\llangle}{\mathopen}%
                     {MnLargeSymbols}{'164}{MnLargeSymbols}{'164}
\DeclareMathDelimiter{\rrangle}{\mathclose}%
                     {MnLargeSymbols}{'171}{MnLargeSymbols}{'171}
  \def\tikz@plane@origin{\pgfpointxyz{0}{0}{#1}}%
  \def\tikz@plane@x{\pgfpointxyz{1}{0}{#1}}%
  \def\tikz@plane@y{\pgfpointxyz{0}{1}{#1}}%
\tikzset{surface/.style={draw=black, left color=orange,right color=orange,middle
color=orange!60!#1, fill opacity=1},surface/.default=white}
\tikzset{>=latex}
\pgfplotsset{compat=1.17}
\def\multiset#1#2{\ensuremath{\left(\kern-.3em\left(\genfrac{}{}{0pt}{}{#1}{#2}\right)\kern-.3em\right)}}
\providecommand{\Red}{\operatorname{Red}}
\newif\ifpgfcirclecrosssplitcustomfill
\tikzset{%
  circle cross split part fill/.code=\def\pgf@lib@sh@ccs@list@fill{#1}\pgfcirclecrosssplitcustomfilltrue,%
  circle cross split uses custom fill/.is if=pgfcirclecrosssplitcustomfill}
  \savedanchor\centerpoint{%
    \pgfmathsetlength\pgf@xa{\pgfkeysvalueof{/pgf/inner xsep}}%
    \pgfmathsetlength\pgf@ya{\pgfkeysvalueof{/pgf/inner ysep}}%
    \pgf@x\wd\pgfnodeparttextbox
    \pgf@yb\dp\pgfnodeparttextbox
    \pgf@yc\dp\pgfnodeparttwobox
    \ifdim\pgf@yb>\pgf@yc
      \pgf@yc\pgf@yb
    \fi
    \advance\pgf@y-\pgf@yc
    \advance\pgf@x\pgf@xa
    \advance\pgf@y-\pgf@ya
    \advance\pgf@x.5\pgflinewidth
    \advance\pgf@y-.5\pgflinewidth
  }%
  \savedanchor\twoanchor{%
    \pgfmathsetlength\pgf@xa{\pgfkeysvalueof{/pgf/inner xsep}}%
    \pgfmathsetlength\pgf@ya{\pgfkeysvalueof{/pgf/inner ysep}}%
    \advance\pgf@x.5\pgflinewidth
    \advance\pgf@x\pgf@xa
    \advance\pgf@y.5\pgflinewidth
    \advance\pgf@y\pgf@ya
    \pgf@yb\dp\pgfnodeparttextbox
    \pgf@yc\dp\pgfnodeparttwobox
    \ifdim\pgf@yb>\pgf@yc
      \pgf@yc\pgf@yb
    \fi
    \advance\pgf@y\pgf@yc
  }%
  \savedanchor\threeanchor{%
    \pgfmathsetlength\pgf@ya{\pgfkeysvalueof{/pgf/inner ysep}}%
    \pgf@x\wd\pgfnodeparttextbox
    \pgf@yb\dp\pgfnodeparttextbox
    \pgf@yc\dp\pgfnodeparttwobox
    \ifdim\pgf@yb>\pgf@yc
      \pgf@yc\pgf@yb
    \fi
    \advance\pgf@y-\pgf@yc
    \advance\pgf@y-2\pgf@ya
    \advance\pgf@y-\pgflinewidth
    \pgf@yb\ht\pgfnodepartthreebox
    \pgf@yc\ht\pgfnodepartfourbox
    \ifdim\pgf@yb>\pgf@yc
      \pgf@yc\pgf@yb
    \fi
    \advance\pgf@y-\pgf@yc
    \advance\pgf@x-\wd\pgfnodepartthreebox
  }%
  \savedanchor\fouranchor{%
    \pgfmathsetlength\pgf@xa{\pgfkeysvalueof{/pgf/inner xsep}}%
    \advance\pgf@x\wd\pgfnodepartthreebox
    \advance\pgf@x2\pgf@xa
    \advance\pgf@x\pgflinewidth
  }%
  \saveddimen\radius{%
    \pgf@y\ht\pgfnodeparttextbox
    \pgf@yb\ht\pgfnodeparttwobox
    \ifdim\pgf@yb>\pgf@y
      \pgf@y\pgf@yb
    \fi
    \pgf@yc\dp\pgfnodeparttextbox
    \pgf@yb\dp\pgfnodeparttwobox
    \ifdim\pgf@yc>\pgf@yb
      \advance\pgf@y\pgf@yc
    \else
      \advance\pgf@y\pgf@yb
    \fi
    \pgf@yb\ht\pgfnodepartthreebox
    \ifdim\pgf@yb<\ht\pgfnodepartfourbox
      \pgf@yb\ht\pgfnodepartfourbox
    \fi
    \pgf@yc\dp\pgfnodepartthreebox
    \ifdim\pgf@yc<\dp\pgfnodepartfourbox
      \advance\pgf@yb\dp\pgfnodepartfourbox
    \else
      \advance\pgf@yb\pgf@yc
    \fi
    \ifdim\pgf@yc>\pgf@y
      \pgf@y\pgf@yc
    \fi
    \pgfmathsetlength\pgf@ya{\pgfkeysvalueof{/pgf/inner ysep}}%
    \advance\pgf@y2\pgf@ya
    \pgf@x\wd\pgfnodeparttextbox
    \pgf@xa\wd\pgfnodepartthreebox
    \pgf@xb\wd\pgfnodeparttwobox
    \pgf@xc\wd\pgfnodepartfourbox
    \ifdim\pgf@xa>\pgf@x
      \pgf@x\pgf@xa
    \fi
    \ifdim\pgf@xb>\pgf@x
      \pgf@x\pgf@xb
    \fi
    \ifdim\pgf@xc>\pgf@x
      \pgf@x\pgf@xc
    \fi
    \pgfmathsetlength\pgf@xa{\pgfkeysvalueof{/pgf/inner xsep}}%
    \advance\pgf@x2\pgf@xa
    \ifdim\pgf@y>\pgf@x
      \pgf@x\pgf@y
    \fi
    \advance\pgf@x.5\pgflinewidth
    \pgfmathsetlength{\pgf@xb}{\pgfkeysvalueof{/pgf/minimum width}}%
    \pgfmathsetlength{\pgf@yb}{\pgfkeysvalueof{/pgf/minimum height}}%
    \ifdim\pgf@x<.5\pgf@xb
        \pgf@x=.5\pgf@xb
    \fi
    \ifdim\pgf@x<.5\pgf@yb
        \pgf@x=.5\pgf@yb
    \fi
    \pgfmathsetlength{\pgf@xb}{\pgfkeysvalueof{/pgf/outer xsep}}%
    \pgfmathsetlength{\pgf@yb}{\pgfkeysvalueof{/pgf/outer ysep}}%
    \ifdim\pgf@xb<\pgf@yb
      \advance\pgf@x\pgf@yb
    \else
      \advance\pgf@x\pgf@xb
    \fi
  }%
    \pgfmathsetlength{\pgf@xb}{\pgfkeysvalueof{/pgf/outer xsep}}%
    \pgfmathsetlength{\pgf@yb}{\pgfkeysvalueof{/pgf/outer ysep}}%
    \pgfmathsetlength{\pgf@xb}{\pgfkeysvalueof{/pgf/outer xsep}}%
    \pgfmathsetlength{\pgf@yb}{\pgfkeysvalueof{/pgf/outer ysep}}%
      \pgf@lib@sh@rs@process@list{\pgf@lib@sh@ccs@list@fill}{4}%
      {%
        \pgfmathloop
           \ifnum\pgfmathcounter>4%
           \else%
             \pgf@lib@sh@getalpha\pgf@lib@sh@rs@number{\pgfmathcounter}%
              \edef\pgf@tempa{\csname pgf@lib@sh@rs@\pgf@lib@sh@rs@number @item\endcsname}%
              \ifx\pgf@tempa\pgf@lib@sh@rs@nonetext\else
                \pgfsetfillcolor{\pgf@tempa}%
                \pgf@lib@sh@ccs@angles{\pgfmathcounter}%
                \pgfpathmoveto{\centerpoint}%
                \pgfpathlineto{\pgfpointadd{\centerpoint}{\pgfqpointpolar{\pgf@lib@sh@ccs@angle}{\pgfutil@tempdima}}}%
                \pgfpatharc{\pgf@lib@sh@ccs@angle}{\pgf@lib@sh@ccs@angle@}{\pgfutil@tempdima}%
                \pgfpathclose
                \pgfusepathqfill
              \fi
        \repeatpgfmathloop
      }%
\def\pgf@lib@sh@ccs@angles#1{%
  \ifcase#1\or\def\pgf@lib@sh@ccs@angle{90}%
           \or\def\pgf@lib@sh@ccs@angle{0}%
           \or\def\pgf@lib@sh@ccs@angle{180}%
           \else\def\pgf@lib@sh@ccs@angle{270}%
  \fi
  \edef\pgf@lib@sh@ccs@angle@{\number\numexpr\pgf@lib@sh@ccs@angle+90\relax}%
}
\theoremstyle{plain}
\newtheorem{thm}{Theorem}
\newtheorem{lemma}[thm]{Lemma}
\newtheorem{cor}[thm]{Corollary}
\newtheorem{prop}[thm]{Proposition}
\newtheorem{qn}[thm]{Question}
\newtheorem{claim}[thm]{Claim}
\theoremstyle{definition}
\newtheorem{defn}[thm]{Definition}
\theoremstyle{remark}
\newtheorem*{rem*}{Remark}
\newtheorem*{prop*}{Proposition}
\newtheorem*{claim*}{Claim}
\numberwithin{equation}{section}
\numberwithin{thm}{section}
\newcommand{\tpmod}[1]{{\@displayfalse\pmod{#1}}}
\title{On the distance distribution of Reed-Muller codes}
\author{Neil Kolekar}
\email{nkolekar@gmail.com}
\date{\today}
\begin{document}
\setlength{\parindent}{15pt}

\begin{abstract}
     In this paper, we give error bounds for the distance distribution of Reed-Muller codes, extending prior work on the distance distribution of Reed-Solomon codes. This is equivalent to the problem of counting multivariate polynomials over a finite field with prescribed degree, coefficients, and number of zeroes. We provide a solution to this problem using the character sum method, which offers a new unified framework applicable to a broad class of polynomial enumeration problems over finite fields that involve prescribed evaluation vectors. 
     
     This work effectively makes the first systematic attempt to study the coset weight distribution problem for Reed-Muller codes of fixed degree over large finite fields, which was proposed in MacWilliams and Sloane's 1977 textbook \emph{The Theory of Error Correcting Codes}. 
\end{abstract}

\maketitle

\section{Introduction}
\subsection{Motivating problem}
Let $\mathbb{F}_q$ be the finite field of order $q$ and characteristic $p$. Fix a subset $D = \{\alpha_1, \dots, \alpha_n\}\subseteq \mathbb{F}_q^v$. The \emph{Reed-Muller code} $\textrm{RM}_q(d, v)$ is the set of all codewords of the form \[(f(\alpha_1), \dots, f(\alpha_n)),\]where $f$ ranges over polynomials in $\mathbb{F}_q[x_1, \dots, x_v]$ of degree at most $d-1$. 

For two vectors $\mathbf{u} = (u_1, \dots, u_k)$ and $\mathbf{w} = (w_1, \dots, w_k)$ in $\mathbb{F}_q^k$, the \emph{Hamming distance} $\operatorname{dist}(\mathbf{u}, \mathbf{w})$ is the number of indices $1 \leq i \leq k$ such that $u_i \neq w_i$. Accordingly, if $f, g \in \mathbb{F}_q[x_1, \dots, x_v]$ are polynomials of degree at most $d-1$, then the Hamming distance between their associated codewords equals $n-r$, where $r$ is the number of common evaluation points $\alpha_i \in D$ at which $f(\alpha_i) = g(\alpha_i)$, that is, the number of zeroes of $f-g$ on $D$. The \emph{standard Reed-Muller codes} are those with $D=\mathbb{F}_q^v \setminus \{\mathbf{0}\}$. The \emph{distance distribution problem} for Reed-Muller codes asks the following. 
\begin{qn}\label{qn: distance distribution problem}
    Given a received word $\mathbf{u}$ and positive integer $k$, compute the number of codewords $\mathbf{w} \in \mathrm{RM}_q(d, v)$ for which $\operatorname{dist}(\mathbf{u}, \mathbf{w}) = k$.
\end{qn}
Question \ref{qn: distance distribution problem}, in the context of general linear codes, is an important problem in coding theory which is widely open for most codes. The distance distribution problem essentially consists of two subproblems. In the case where $\mathbf{u} \in \mathrm{RM}_q(d, v)$, the problem becomes the \emph{weight distribution problem}; when $\mathbf{u} \not\in \mathrm{RM}_q(d, v)$, the problem becomes the \emph{coset weight distribution problem}. Our interest mainly lies in the latter, which has seen little progress for Reed-Muller codes since it was introduced in the 1970s. The coset weight distribution problem dates back to 1977 when it was proposed in MacWilliams and Sloane's classic textbook \emph{The Theory of Error-Correcting Codes} \cite[, Research Problem 1.1]{MacWilliams1977}; the weight distribution problem was proposed in the same text \cite[, Research Problem 15.1]{MacWilliams1977}. Even in the specialization of Reed-Solomon codes, the coset weight distribution problem has only recently been studied \cite{LiWan2020}. In this paper, we study the distance distribution problem for Reed-Muller codes, taking forward the character sum approach of Li and Wan \cite{LiWan2020} to the multivariate setting. \\

Naturally, one would consider the polynomial interpretation of this problem. The equivalent translation is presented below, upon taking the polynomial reversal of the polynomials associated with the (code)words involved.

\begin{qn} 
Fix a positive integer $\ell \le v(q-1)$. Given a fixed polynomial $g \in \mathbb{F}_q[x_1, \dots, x_v]$ of degree $\ell-1$, compute the number of polynomials $f \in \mathbb{F}_q[x_1, \dots, x_v]$ such that the maximum degree of $x_i$ in $f$ is at most $q-1$ for all $1 \le i \le v$, $f-g$ has exactly $s$ zeroes in $\mathbb{F}_q^v$, and \[ \overline{f-g} \in \langle x_1^{i_1}\dots x_v^{i_v} \mid i_1+\dots+i_v = \ell\rangle, \] where $\overline{f-g}$ is defined as the polynomial $x_1^{q-1}\cdots x_v^{q-1}(f-g)(x_1^{-1}, \dots, x_v^{-1})$. 
\end{qn}

The primary goal of this paper is to give error bounds on the count above, which produces error bounds for \emph{standard} Reed-Muller codes. Our main result is the following.

\begin{thm}\label{main result}
    Let $N_k(d, g)$ denote the number of polynomials $f(x_1, \dots, x_v) \in \mathbb{F}_q[x_1, \dots, x_v]$ such that the degree of every variable $x_i$ in $f$ is at most $q-1$, $f-g$ has no nonzero terms of degree $\ell-1$ or lower, and $\overline{f-g}$ has exactly $q^v-k-1$ zeroes. Furthermore, let \[ E=\#\{(i_1, \dots, i_v) \in (\mathbb{Z} \cap [0, q-1])^v \mid i_1+\dots i_v \le \ell-1\}. \] Then, $N_k(d, g)$ satisfies the error bound \[ \left|N_k(d,g)-\binom{(q-1)q^{v-1}}{k}\, q^{k-E-q^{v-1}+1}\right| \le \binom{v q_2 q^{\binom{\ell+v-3}{v-1}}+k-1}{k-1} q^{k\left(\binom{\lceil \ell/2 \rceil + v}{v}+1\right)-E},
\] where $q_1 = \max(\ell-1, (q-1)/\sqrt{q(p-1)})$ and \[ q_2 = q_1 + \left(\frac{q-1}{\sqrt{p-1}}-q_1\right) \cdot \left(\sum_{k=1}^{\lfloor p/2 \rfloor} \binom{q}{k} \cdot \left(\frac{\ell}{p}\right)^k\right)^{-1}. \]
\end{thm}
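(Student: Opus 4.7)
The plan is to adapt the character sum method of Li and Wan \cite{LiWan2020}, extending it from univariate to multivariate polynomials. The bijection $f - g \mapsto h := \overline{f - g}$ that swaps each exponent $\alpha_i$ with $q - 1 - \alpha_i$ turns $N_k(d, g)$ into the number of polynomials $h$ in the subspace
\[ W' \;:=\; \{ h \in \mathbb{F}_q[x_1, \ldots, x_v] : \deg_{x_i} h \le q - 1 \ \forall\, i,\ \ \deg h \le v(q-1) - \ell \} \]
whose evaluation vector on $\mathbb{F}_q^v$ has Hamming weight exactly $k + 1$. Since evaluation is a bijection from polynomials of individual degree at most $q - 1$ onto $\mathbb{F}_q^{\mathbb{F}_q^v}$, the code $W'$ has dimension $q^v - E$ inside $\mathbb{F}_q^{q^v}$, and the problem becomes a weight-distribution count for $W'$. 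I then expand the indicator of the weight condition via the orthogonality identity $\bbone[h(\alpha) = 0] = q^{-1} \sum_{t \in \mathbb{F}_q} \psi(t\, h(\alpha))$, for a fixed non-trivial additive character $\psi$ of $\mathbb{F}_q$. After swapping the order of summation, the inner sum over $h \in W'$ collapses to $|W'| = q^{q^v - E}$ times the indicator that the linear functional $h \mapsto \sum_\alpha t_\alpha h(\alpha)$ vanishes on $W'$. In the monomial basis this functional equals $\sum_\beta M_\beta(\mathbf{t})\, h_\beta$, with $M_\beta(\mathbf{t}) := \sum_\alpha t_\alpha \alpha^\beta$, so its vanishing imposes linear relations on the power moments of the tuple $\mathbf{t}$.

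The main term $\binom{(q-1)q^{v-1}}{k}\, q^{k - E - q^{v-1} + 1}$ will arise from the diagonal contributions in this expansion---the principal-character terms, reorganized to match the binomial factor coming from the choice of support $S$ of size $k + 1$. For the error bound, I bound the non-diagonal character sums. Via a Cauchy--Schwarz squaring step, this reduces to estimating exponential sums of the form $\sum_{x \in \mathbb{F}_q^v} \psi(P(x))$ for polynomials $P$ of total degree at most $\lceil \ell/2 \rceil$; the dimension of such polynomials, $\binom{\lceil \ell/2 \rceil + v}{v}$, is precisely the exponent in the error bound. Deligne's theorem on multivariate exponential sums then yields square-root cancellation whenever the top-degree form of $P$ is non-singular. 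The hybrid parameters $q_1 = \max(\ell - 1, (q-1)/\sqrt{q(p-1)})$ and $q_2$ encode a dichotomy in this analysis: $q_1$ is the generic Weil-type bound for non-singular $P$, while $q_2$ adjusts for contributions from the degenerate stratum where Deligne's hypothesis fails, whose measure is captured by the truncated binomial tail $\sum_{k=1}^{\lfloor p/2 \rfloor} \binom{q}{k}(\ell/p)^k$. Combining these estimates with the combinatorial factor $v q_2 q^{\binom{\ell + v - 3}{v-1}}$, which arises from counting linear families of degree-$(\ell - 2)$ polynomials parameterizing the reduction, produces the stated error bound.

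\textbf{Main obstacle.} The principal difficulty will be the multivariate exponential sum analysis when Deligne's non-singularity hypothesis fails. In the univariate setting of Li and Wan, the Weil bound applies uniformly to all non-trivial sums, so no stratification is needed; the multivariate extension forces a careful dichotomy between generic and degenerate character-parameter tuples, which is the source of the two-tier structure encoded by $q_1, q_2$. A substantial secondary challenge is the combinatorial bookkeeping of signs, multiplicities, and products of character sum factors across subsets $S$, parameter tuples $\mathbf{t}$, and monomial supports through every stage of the expansion---these must be tracked carefully to ensure the error term genuinely dominates only the non-principal contributions.
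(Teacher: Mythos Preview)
Your plan diverges substantially from the paper's proof, and several of your explanations for the specific constants in the bound are incorrect guesses rather than a workable outline.

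The paper does \emph{not} work with additive characters of $\mathbb{F}_q$ and Deligne's theorem. Instead it works with additive and multiplicative characters of the \emph{truncated polynomial ring} $\mathcal{E}=\mathbb{F}_q[x_1,\dots,x_v]/\langle x^{\mathbf i}:|\mathbf i|=\ell\rangle$. The skeleton is: (i) Lagrange interpolation writes every $h\in\mathcal{M}$ vanishing on $Z$ as $\sum_{\alpha\notin Z}a_\alpha P_\alpha$ with $a_\alpha\in\mathbb{F}_q^*$; (ii) the indicator that the reduction of this sum in $\mathcal{E}$ equals $\varepsilon_0$ is expanded via characters of $\mathcal{E}_+$ and $\mathcal{E}_\times$, producing a product of \emph{Gauss sums over $\mathcal{E}$}; (iii) these Gauss sums are computed exactly: $|G(\chi,\psi)|^2=|\mathcal{E}|\,|K_\psi|\cdot\llbracket\chi(1+K_\psi)=1\rrbracket$, where $K_\psi$ is the largest ideal of $\mathcal{E}$ contained in $\ker\psi$; (iv) the sum over $Z$ is handled by the Li--Wan distinct-coordinate sieve, not by Cauchy--Schwarz.

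Every constant you try to reinterpret has a different origin. The exponent $\binom{\lceil\ell/2\rceil+v}{v}$ is \emph{not} the dimension of an auxiliary polynomial space appearing after squaring; it is an upper bound for $\operatorname{codim}(K_\psi)$ obtained by identifying ideals $I$ with $\dim\operatorname{Soc}(\mathcal{E}/I)=1$ with generalized Hankel matrices and summing their ranks. The factor $q^{\binom{\ell+v-3}{v-1}}$ counts such ideals, not ``linear families of degree-$(\ell-2)$ polynomials.'' The quantity $q_1=\max(\ell-1,(q-1)/\sqrt{q(p-1)})$ is the minimum of the univariate Weil bound for $\sum_{a}\chi(1-ax_l)$ and a second-moment bound over $\widehat{\mathcal{E}_\times}$; Deligne plays no role. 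The tail $\sum_{k=1}^{\lfloor p/2\rfloor}\binom{q}{k}(\ell/p)^k$ in $q_2$ is a \emph{lower bound on the order of a multiplicative subgroup} of $\mathcal{E}_\times$ generated by linear factors $x_l-a$ (via a Vandermonde argument), which controls the probability that a random product of characters is trivial on those factors; it has nothing to do with a singular stratum in a Deligne stratification.

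The concrete gap is this: your plan never explains how the moment conditions $M_\beta(\mathbf t)=0$ for $|\beta|\le v(q-1)-\ell$ will be analyzed, nor how a Deligne-type bound on $\sum_x\psi(P(x))$ would interact with the huge combinatorics of choosing supports and parameter tuples so as to produce the precise binomial $\binom{vq_2 q^{\binom{\ell+v-3}{v-1}}+k-1}{k-1}$. That binomial arises in the paper only after the Li--Wan sieve converts the permutation sum into an exponential generating function and one extracts the $t^k$ coefficient of $(1-ct)^{-A}$; nothing in your outline sets up such a structure. If you intend an alternative route, you would need to replace the entire Sections~5--7 machinery (Gauss sums on $\mathcal{E}$, socle-one ideal enumeration, multiplicative-character counting, and the sieve) with a self-contained Deligne-based argument, and there is no indication the same constants would emerge.
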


 \subsection{Past work}
     In coding theory literature, the distance distribution problem for linear codes dates back to 1977 \cite{MacWilliams1977}. There is, however, a related problem called the \emph{list-decoding problem} which was introduced in 1957 by Elias \cite{Elias1957}. Instead of asking for the number of codewords which are precisely some distance away from a particular code, the list-decoding problem asks for an algorithm to determine the list of codewords \emph{within} that distance. Beginning from the 1960s, there has been much progress made on the algorithmic aspect of the list-decoding problem. On the other hand, between 2007 and 2015 there has been much work on the combinatorial aspect of this problem \cite{Bhowmick2014, Gopalan2010, Kaufman2008}, which eventually resolved the weight distribution problem for Reed-Muller codes \cite{Bhowmick2022}. The techniques used for the work in this time period were largely combinatorial, in that they were based on the higher-order Fourier analysis of multivariate polynomials. The natural next step in this line of research is to consider the distance distribution problem. Currently, the progress made towards the goal of resolving this problem for the generality of Reed-Muller codes only applies to the specialization of Reed-Solomon codes. 
 
  In 2020, Li and Wan \cite{LiWan2020} gave error bounds on the number of Reed-Solomon codewords that are a fixed distance away from a given codeword, using the character sum method, improving error bounds of Zhu and Wan \cite{Zhu2012}. These error bounds were later improved upon by Gao and Li \cite{Gao2022} in 2022, who strengthened a weak bound in \cite{LiWan2020} using the saddle point method. The improved error bounds were used towards constructing new classes of deep holes of Reed-Solomon codes. Before the character sum method of Zhu and Wan for estimating the distance distribution was introduced, the techniques that were employed for Reed-Solomon codes were highly algebro-geometric. These techniques led to the results in \cite{Cafure2011}, \cite{Cheng2005}, and \cite{Liao2010}.

  Like past work on the distance distribution of Reed-Solomon codes, this paper too utilizes character sums. Specifically, we understand the vanishing sets of polynomials using Lagrange interpolation, and analyze character sums over truncated multivariate polynomial quotient rings involving both additive and multiplicative characters. The polynomials associated with Lagrange interpolation are multiplicatively well-structured, but are not well-structured in the additive sense. This motivates combining additive characters and multiplicative characters to estimate the desired number of codes. In particular, we evaluate Gauss sums over truncated polynomial quotient rings, which have deep connections to the algebraic structure of these rings. Part of the technical work we perform pertaining to algebraic properties of these quotient rings has not been studied prior to this paper, and is thus potentially useful beyond the scope of the distance distribution problem. 
 
 \subsection*{Outline}
 The remainder of this paper is structured as follows. In Section \ref{section: preliminaries}, we set up the notation and terminology used in this paper. In Section \ref{section: char sum setup}, we use Lagrange interpolation to establish a character sum formula for the count that we aim to give error bounds on. Section \ref{section: center of approximation} provides an exact formula for the center of approximation we use. In Section \ref{section: gauss sums}, we find an explicit formula for the sizes of the Gauss sums over a particular polynomial quotient ring. In Section \ref{section: key enumerations}, we perform two key enumerations required for our bounds: the first is of ideals of the polynomial quotient ring with certain properties, and the second is of multiplicative characters with prescribed values. To that end, {our main result} is proved in Section \ref{section: li-wan sieve}, using the Li-Wan coordinate-sieve formula aided by our results leading up to that point.  Finally, we close the paper with Section \ref{section: future directions} in which we discuss potential future research directions.

\section{Preliminaries}\label{section: preliminaries}
In this section, we establish the notation and terminology used throughout the paper. \\

We begin with notation for polynomials over finite fields. Let $\mathbb{F}_q$ be the finite field of order $q$ and characteristic $p$, and let $\mathbb{F}_q^{*} = \mathbb{F}_q \setminus \{0\}$. Write $\mathbb{F}_q[x_1, \dots, x_v]$ for the polynomial ring in variables $x_1, \dots, x_v$ over $\mathbb{F}_q$. For $f \in \mathbb{F}_q[x_1, \dots, x_v]$, we use  
\[
[x_1^{a_1}\cdots x_v^{a_v}]f
\]
to denote the coefficient of $x_1^{a_1}\cdots x_v^{a_v}$ in $f$. We write $\mathbf{x} = (x_1, \dots, x_v)$, so that $f(\mathbf{x})$ abbreviates $f(x_1, \dots, x_v)$. Given a subset $T \subseteq [v]$, denote $x_T$ by the monomial $\prod_{i \in T} x_i$. 

The degree of a polynomial $f$ is denoted by $\deg(f)$, with the convention $\deg(0) = -\infty$. Let $\mathcal{M}$ denote the set of polynomials $f \in \mathbb{F}_q[x_1, \dots, x_v]$ in which the degree of each variable $x_i$ is at most $q-1$. Finally, for $f \in \mathcal{M}$, define $\overline{f} = x_1^{q-1}\cdots x_v^{q-1}f(x_1^{-1}, \dots, x_v^{-1})$.

Set 
\[\mathcal{E} := \frac{\mathbb{F}_q[x_1, \dots, x_v]}{\langle x_1^{i_1} \dots x_v^{i_v} \mid i_1+\dots+i_v = \ell \rangle}.\] Note that the equivalence classes defined by the quotient ring $\mathcal{E}$ represents the prescription of coefficients behind terms with degree at most $\ell-1$. 

For any ring $\mathcal{R}$, we will use $\mathcal{R}_{\times}$ to denote the group of units in $\mathcal{R}$ and $\mathcal{R}_{+}$ to denote the additive group on $\mathcal{R}$. 

Let $\mathcal{S}$ denote the set $\{x_1^{i_1}\dots x_v^{i_v} \mid i_1+\dots+i_v \le \ell-1\}$. Note that the sets $g \cdot \mathcal{E}_{\times}$ form a partition of $\mathcal{E}$, where $g$ runs over $\mathcal{S}$. For every $c \in \mathcal{E}$, let $S(c)$ denote the unique monomial $g \in \mathcal{E}$ with coefficient $1$ for which $g \mid c$ and $gx_i \nmid c$ for all $1 \le i \le v$, and let $\Red(c)$ represent the polynomial $c/S(c)$ considered over $\mathbb{F}_q[x_1, \dots, x_v]$ and subsequently reduced in $\mathcal{E}$. 
Throughout this paper, we will also use the Iverson bracket $\llbracket P \rrbracket$, which has value $1$ if the predicate $P$ is true and has value $0$ otherwise. 

Next, we set up notation regarding the vanishing sets of polynomials. Given a subset $Z \subseteq \mathbb{F}_q^v \setminus \{\mathbf{0}\}$, let $V(Z)$ denote the set of polynomials in $\mathbb{F}_q[x_1, \dots, x_v]$ that vanish on $Z$ and no other points in $\mathbb{F}_q^v$. Let $N_Z(\varepsilon_0)$ denote the set of polynomials in $V(Z)$ with $x_i$-degree at most $q-1$ for all $1 \le i \le v$ that reduce to $\varepsilon_0$ in $\mathcal{E}$.

Finally, we provide notation and preliminary results on the character sums used in this work. A \emph{character} on a group $G$ is group homomorphism $\phi\colon G \to \{z \in \mathbb{C} : |z| = 1\}$. The set of characters on $G$ is denoted $\widehat{G}$. A \emph{multiplicative character} on a ring $R$ is a character $\chi$ on $R_{\times}$, and an \emph{additive character} on $R$ is a character $\psi$ on $R_+$. The Pontryagin duality theorem implies that $G \cong \widehat{G}$ for all finite abelian groups $G$; in particular, $|G| = |\widehat{G}|$. Given $c \in G$ and $\psi \in \widehat{G}$, let $\psi_c$ denote the additive character defined by the mapping $a \mapsto \psi(ac)$. The \emph{orthogonality of characters} states that for any character $\phi$ defined on a group $G$, $\sum_{g \in G} \phi(g)= |G| \cdot \llbracket \phi(G) = 1 \rrbracket$. Given a group $G$, a subgroup $H$ of $G$ with finite index in $G$, and a character $\phi$ defined on $H$, the number of characters $\overline{\phi}$ on $G$ which are prescribed to $\phi$ on $H$ is $|G:H| = |G|/|H|$. For a multiplicative character $\chi$ and an additive character $\psi$ defined on a ring $\mathcal{R}$, define their \emph{Gauss sum} by \[ G(\chi, \psi) := \sum_{r \in \mathcal{R}_{\times}} \chi(r)\psi(r). \]\noindent With respect to this terminology, our main problem is equivalent to the following: 
\begin{center}
\textsl{Give an estimate on the number of polynomials in $\mathbb{F}_q[x_1, \dots, x_v]$ with all variables $x_i$ having degree at most $q-1$, exactly $s=q^v-k-1$ zeroes, and reduction $\varepsilon_0$ in $\mathcal{E}$. Equivalently, estimate the sum $\sum_{|Z| = s} N_Z(\varepsilon_0)$.} 
\end{center}

\noindent Heuristically, we should expect this quantity to be roughly \[ \frac{1}{|\mathcal{E}|} \sum_{\varepsilon \in \mathcal{E}} \sum_{|Z| = s} N_Z(\varepsilon). \] The key to giving sharp estimates on $\sum_{|Z| = s} N_Z(\varepsilon_0)$ is to give an upper bound on the \emph{error term} \[ \left|\sum_{|Z| = s} N_Z(\varepsilon_0) - \frac{1}{|\mathcal{E}|} \sum_{\varepsilon \in \mathcal{E}} \sum_{|Z| = s} N_Z(\varepsilon)\right|. \] In our main result, we derive an upper bound on the above expression. 

\section{Character Sum Setup}\label{section: char sum setup}
 In this section, we give a character sum formula for $N_Z(\varepsilon_0)$. We first provide an alternative description for $N_Z(\varepsilon_0)$ in Corollary \ref{N_Z description}, based on Lagrange interpolation. Let us begin by proving the following technical fact, which is a standard consequence of the Combinatorial Nullstellensatz~\cite[, Theorem 1.2]{Alon1999}. 

\begin{lemma}\label{combo null application}
    Let $f$ be a polynomial in $\mathcal{M}$ which vanishes on $\mathbb{F}_q^v$. Then, $f$ is the zero polynomial. 
\end{lemma}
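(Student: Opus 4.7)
The plan is to apply the Combinatorial Nullstellensatz directly, as the statement of the lemma essentially says that polynomials with variable-degrees below $q$ are determined by their evaluations on $\mathbb{F}_q^v$. I would argue by contradiction: suppose $f \in \mathcal{M}$ is nonzero yet vanishes identically on $\mathbb{F}_q^v$, and seek a monomial of $f$ that certifies non-vanishing at some point of $\mathbb{F}_q^v$.

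Concretely, I would pick a monomial $x_1^{t_1}\cdots x_v^{t_v}$ of \emph{maximal total degree} among those appearing in $f$ with a nonzero coefficient, so that $\deg(f) = t_1 + \cdots + t_v$ and the coefficient $[x_1^{t_1}\cdots x_v^{t_v}]f$ is nonzero. Because $f$ lies in $\mathcal{M}$, each $t_i$ satisfies $t_i \le q-1 < q = |\mathbb{F}_q|$. Setting $S_i := \mathbb{F}_q$ for every $i$, the inequalities $|S_i| > t_i$ required by the Combinatorial Nullstellensatz~\cite[Theorem 1.2]{Alon1999} hold, and the theorem then guarantees the existence of a point $(s_1,\dots,s_v)\in S_1\times\cdots\times S_v = \mathbb{F}_q^v$ with $f(s_1,\dots,s_v) \neq 0$. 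This contradicts the assumption that $f$ vanishes on $\mathbb{F}_q^v$, so $f$ must be the zero polynomial.

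There is essentially no obstacle here: the only subtlety is ensuring that one picks a monomial of maximal total degree (not merely any nonzero monomial), since the Combinatorial Nullstellensatz requires that $t_1+\cdots+t_v$ equal $\deg(f)$. If one preferred a self-contained approach avoiding the Nullstellensatz, an equivalent and equally short argument is induction on $v$: the base case $v=1$ is the standard fact that a univariate polynomial of degree $\le q-1$ with $q$ distinct roots is zero, and for the inductive step one writes $f = \sum_{i=0}^{q-1} f_i(x_1,\dots,x_{v-1})\, x_v^{i}$, observes that for each fixed $(a_1,\dots,a_{v-1})\in\mathbb{F}_q^{v-1}$ the univariate polynomial $f(a_1,\dots,a_{v-1},x_v)$ has $q$ roots and degree $\le q-1$ so is zero, whence every $f_i$ vanishes on $\mathbb{F}_q^{v-1}$, and the inductive hypothesis forces each $f_i = 0$.
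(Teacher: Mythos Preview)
Your proposal is correct and follows essentially the same approach as the paper: both argue by contradiction, select a monomial of maximal total degree, take $S_i = \mathbb{F}_q$, and invoke the Combinatorial Nullstellensatz \cite[Theorem~1.2]{Alon1999} to produce a non-vanishing point. Your additional remark about the alternative induction-on-$v$ argument is a valid self-contained variant, but the primary proof matches the paper's almost verbatim.
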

\begin{proof}
    Assume for contradiction that $f$ is a nonzero polynomial. Then there exists a nonzero term $cx_1^{t_1}\dots x_v^{t_v}$ in $f(\mathbf{x})$ with $c \in \mathbb{F}_q^{*}$ such that $t_1+\dots+t_v = \deg(f)$. Note that $t_1, \dots, t_v \le q-1$ as $f \in \mathcal{M}$. Consider setting $S_i = \mathbb{F}_q$ for all $1 \le i \le v$. Since $t_i \le q-1 < |S_i|$ for all $1 \le i \le v$, the Combinatorial Nullstellensatz \cite[, Theorem 1.2]{Alon1999} implies that $f(\mathbf{v}) \neq 0$ for some $\mathbf{v} \in S_1 \times \dots \times S_v = \mathbb{F}_q^v$, a contradiction. Hence, $f$ is the zero polynomial.
\end{proof}
An important fact utilized in \cite{LiWan2020} is that the polynomials in $\mathbb{F}_q[x]$ with distinct prescribed zeroes $r_1, \dots, r_k$ can be characterized by the multiples of $(x-r_1)\dots(x-r_k)$. This, infamously, fails to persist in settings with two or more variables, in which polynomials with prescribed zeroes are often characterized by the elements of an ideal generated by two or more elements. The simplest description of such an ideal can be given using Lagrange interpolation; as such, Lagrange interpolation serves as the foundation of our character sum setup, and naturally extends to codes beyond Reed-Muller codes. 
\begin{defn}
    For each $(c_1, \dots, c_v) \in \mathbb{F}_q^v$, define the polynomial \[ P_{(c_1, \dots, c_v)}(\mathbf{x}) := \prod_{i=1}^v(1-(x_i-c_i)^{q-1}). \]
\end{defn}
\begin{lemma}\label{flt-indicator}
For any polynomial $f \in \mathcal{M}$, we have $f \in V(Z)$ if and only if $f$ takes the form $\sum_{\mathbf{v} \in \mathbb{F}_q^v \setminus Z} a_{\mathbf{v}} P_{\mathbf{v}}$ for unique $a_{\mathbf{v}} \in \mathbb{F}_q^{*}$.
\end{lemma}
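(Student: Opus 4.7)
The plan is to interpret each polynomial $P_\mathbf{v}$ as an indicator function on $\mathbb{F}_q^v$ and use this to expand every $f \in \mathcal{M}$ in the basis $\{P_\mathbf{v} : \mathbf{v} \in \mathbb{F}_q^v\}$. The vanishing condition defining $V(Z)$ then translates directly into a condition on which coefficients in this expansion are nonzero.

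First I would observe, via Fermat's little theorem, that for any $\mathbf{v} = (c_1,\dots,c_v) \in \mathbb{F}_q^v$ and $\mathbf{w} = (w_1, \dots, w_v) \in \mathbb{F}_q^v$, the factor $1 - (w_i - c_i)^{q-1}$ equals $1$ when $w_i = c_i$ and $0$ otherwise. Hence $P_\mathbf{v}(\mathbf{w}) = \llbracket \mathbf{v} = \mathbf{w} \rrbracket$; that is, $P_\mathbf{v}$ is the indicator function of the point $\mathbf{v}$ on $\mathbb{F}_q^v$. Moreover, since each factor $1 - (x_i - c_i)^{q-1}$ has $x_i$-degree exactly $q-1$ and involves no other variable, we have $P_\mathbf{v} \in \mathcal{M}$.

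Next I would establish the expansion. For $f \in \mathcal{M}$, set $g := \sum_{\mathbf{v} \in \mathbb{F}_q^v} f(\mathbf{v}) P_\mathbf{v}$. Clearly $g \in \mathcal{M}$ (as a sum of elements of $\mathcal{M}$), and for each $\mathbf{w} \in \mathbb{F}_q^v$ we have $g(\mathbf{w}) = f(\mathbf{w})$ by the indicator property. Thus $f - g \in \mathcal{M}$ vanishes on all of $\mathbb{F}_q^v$, so Lemma \ref{combo null application} forces $f = g$. This shows that every $f \in \mathcal{M}$ admits the unique representation
\[
f = \sum_{\mathbf{v} \in \mathbb{F}_q^v} f(\mathbf{v}) P_\mathbf{v}.
\]
Uniqueness follows because evaluation at $\mathbf{v}$ recovers the coefficient of $P_\mathbf{v}$.

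Finally, I would read off the lemma. The condition $f \in V(Z)$ says precisely that $f(\mathbf{v}) = 0$ for all $\mathbf{v} \in Z$ and $f(\mathbf{v}) \neq 0$ for all $\mathbf{v} \in \mathbb{F}_q^v \setminus Z$. In terms of the expansion, this is equivalent to $f = \sum_{\mathbf{v} \in \mathbb{F}_q^v \setminus Z} a_\mathbf{v} P_\mathbf{v}$ with $a_\mathbf{v} := f(\mathbf{v}) \in \mathbb{F}_q^{\ast}$, and these coefficients are unique by the preceding paragraph. There is no real obstacle here: the only nontrivial input is the Combinatorial Nullstellensatz consequence already stated as Lemma \ref{combo null application}, and the rest is a direct verification using Fermat's little theorem.
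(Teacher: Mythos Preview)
Your proof is correct and follows essentially the same approach as the paper: both use Fermat's little theorem to identify $P_{\mathbf{v}}$ as the indicator of $\mathbf{v}$, invoke Lemma~\ref{combo null application} to obtain the Lagrange expansion of any $f\in\mathcal{M}$, and then read off the $V(Z)$ condition coefficient-wise with uniqueness coming from evaluation.
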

\begin{proof}
    Let $f$ be a fixed polynomial in $\mathcal{M}$. Observe that for every $\mathbf{v}, \mathbf{w} \in \mathbb{F}_q^v$, we have $P_{\mathbf{v}}(\mathbf{w}) = \llbracket \mathbf{v} = \mathbf{w}\rrbracket$ by Fermat's little theorem. Therefore, if $g$ is the polynomial defined by $\sum_{\mathbf{v} \in \mathbb{F}_q^v \setminus Z} f(\mathbf{v})P_{\mathbf{v}}$, then $f$ and $g$ have the same evaluation at every point in $\mathbb{F}_q^v$. Equivalently, $f-g$ vanishes on $\mathbb{F}_q^v$. Observe that both $f$ and $g$ are in $\mathcal{M}$; thus, $f-g \in \mathcal{M}$. By Lemma \ref{combo null application}, it follows that $f=g$, so we conclude that $f = \sum_{\mathbf{v} \in \mathbb{F}_q^v \setminus Z} f(\mathbf{v})P_{\mathbf{v}}$ for all $f \in \mathcal{M}$. Note that $f$ is in $V(Z)$ if and only if the set of all $\mathbf{v} \in \mathbb{F}_q^v$ for which $f(\mathbf{v}) \in \mathbb{F}_q^{*}$ is exactly $\mathbb{F}_q^v \setminus Z$. Hence, $f \in V(Z)$ if and only if $f$ takes the form $\sum_{\mathbf{v} \in \mathbb{F}_q^v \setminus Z} a_{\mathbf{v}} P_{\mathbf{v}}$ for some $a_{\mathbf{v}} \in \mathbb{F}_q^{*}$. The uniqueness of the $a_{\mathbf{v}}$ follows from Fermat's little theorem.
\end{proof}
As such, we may conclude the following about $N_Z(\varepsilon_0)$. 
\begin{cor}\label{N_Z description}
    The number of tuples $(a_{\mathbf{v}})_{\mathbf{v} \in \mathbb{F}_q^v \setminus Z}$ of elements of $\mathbb{F}_q^{*}$ for which $\sum_{\mathbf{v} \in \mathbb{F}_q^v \setminus Z} a_{\mathbf{v}}P_{\mathbf{v}}$ reduces to $\varepsilon_0$ in $\mathcal{E}$.
\end{cor}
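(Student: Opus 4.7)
The statement is essentially a direct translation of Lemma~\ref{flt-indicator} into the language of $N_Z(\varepsilon_0)$, so the plan is short and has no real obstacle—the work has already been done in the lemma. The proof will proceed in three short steps.

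First, I would unpack the definition: by the setup in Section~\ref{section: preliminaries}, $N_Z(\varepsilon_0)$ is the cardinality of the set of polynomials $f \in V(Z) \cap \mathcal{M}$ whose image in $\mathcal{E}$ equals $\varepsilon_0$. So the goal reduces to counting $f \in V(Z) \cap \mathcal{M}$ reducing to $\varepsilon_0$.

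Next, I would invoke Lemma~\ref{flt-indicator}, which provides a bijection
\[
\{f \in V(Z) \cap \mathcal{M}\} \;\longleftrightarrow\; \bigl\{(a_{\mathbf{v}})_{\mathbf{v} \in \mathbb{F}_q^v \setminus Z} \in (\mathbb{F}_q^{*})^{\mathbb{F}_q^v \setminus Z}\bigr\},
\]
sending a tuple $(a_{\mathbf{v}})$ to the polynomial $\sum_{\mathbf{v} \in \mathbb{F}_q^v \setminus Z} a_{\mathbf{v}} P_{\mathbf{v}}$, which lies in $\mathcal{M}$ since each $P_{\mathbf{v}} \in \mathcal{M}$ and $\mathcal{M}$ is closed under addition in the specified degree window.

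Finally, I would transport the reduction condition across this bijection: $f$ reduces to $\varepsilon_0$ in $\mathcal{E}$ if and only if the corresponding sum $\sum_{\mathbf{v} \in \mathbb{F}_q^v \setminus Z} a_{\mathbf{v}} P_{\mathbf{v}}$ does, because the two polynomials are literally equal. Restricting the bijection to those tuples whose associated sum reduces to $\varepsilon_0$ therefore yields the claimed equality, which completes the proof. No additional machinery is needed beyond Lemma~\ref{flt-indicator} and the definition of $N_Z(\varepsilon_0)$.
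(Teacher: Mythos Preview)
Your proposal is correct and matches the paper's approach exactly: the paper states this corollary with no separate proof, treating it as an immediate consequence of Lemma~\ref{flt-indicator} and the definition of $N_Z(\varepsilon_0)$, which is precisely what you have written out in detail.
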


We now give a character sum expression for $N_Z(\varepsilon_0)$. This results in an error bound for $\sum_{|Z| = s} N_Z(\varepsilon_0)$.

\begin{prop}\label{char sum formula for N_Z}
    For all $\alpha \in \mathbb{F}_q^v \setminus \{\mathbf{0}\}$, $\chi \in \widehat{\mathcal{E}_{\times}}$, and $\psi \in \widehat{\mathcal{E}_{+}}$, set 
    \[ \mathbf{Q}_{\alpha}(\chi, \psi) := \frac{q}{|\mathcal{E}|} \chi(\Red(P_{\alpha}))G(\chi^{-1}, \psi_{S(P_{\alpha})}).\] Then we have \[ \left|\sum_{|Z| = s} N_Z(\varepsilon_0) - \frac{1}{|\mathcal{E}|}\sum_{\varepsilon \in \mathcal{E}} \sum_{|Z| = s} N_Z(\varepsilon)\right| \le \frac{1}{|\mathcal{E}|} \sum_{\substack{\psi \in \widehat{\mathcal{E}_{+}} \\ \psi \neq \mathbf{1}}} \left|\sum_{|Z| = s} \prod_{\alpha \in \mathbb{F}_q^v \setminus Z} \left(\sum_{\substack{\chi \in \widehat{\mathcal{E}_{\times}} \\ \chi(\mathbb{F}_q^{*}) = 1}} \mathbf{Q}_{\alpha}(\chi, \psi)\right)\right|. \]
\end{prop}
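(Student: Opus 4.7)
The plan is to execute a two-step orthogonality argument: first using additive characters on $\mathcal{E}_+$ to detect the reduction $\sum_{\mathbf{v}} a_\mathbf{v} P_\mathbf{v} \equiv \varepsilon_0$ in $\mathcal{E}$, then using multiplicative characters on $\mathcal{E}_\times/\mathbb{F}_q^*$ to turn each resulting exponential sum over $a \in \mathbb{F}_q^*$ into the Gauss-sum expression $\mathbf{Q}_\alpha(\chi,\psi)$. Combined with Corollary \ref{N_Z description}, this yields the stated inequality.

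Starting from Corollary \ref{N_Z description} and applying orthogonality of $\widehat{\mathcal{E}_+}$ to the equation $\sum a_\mathbf{v} P_\mathbf{v} = \varepsilon_0$ in $\mathcal{E}$, I would swap summation and use multiplicativity of $\psi$ across the independent coordinates $a_\mathbf{v}$ to obtain
\[
N_Z(\varepsilon_0) = \frac{1}{|\mathcal{E}|}\sum_{\psi \in \widehat{\mathcal{E}_+}} \psi(-\varepsilon_0) \prod_{\alpha \in \mathbb{F}_q^v \setminus Z}\Bigl(\sum_{a \in \mathbb{F}_q^*}\psi(a P_\alpha)\Bigr).
\]
For each inner sum I would invoke the canonical factorization $P_\alpha = S(P_\alpha)\Red(P_\alpha)$ with $\Red(P_\alpha) \in \mathcal{E}_\times$, rewrite the sum as $\sum_{u \in \mathbb{F}_q^*\Red(P_\alpha)} \psi_{S(P_\alpha)}(u)$, and detect the coset via
\[
\llbracket u \in \mathbb{F}_q^*\Red(P_\alpha) \rrbracket = \frac{q-1}{|\mathcal{E}_\times|}\sum_{\chi(\mathbb{F}_q^*)=1}\chi(u)\,\chi^{-1}(\Red(P_\alpha)),
\]
which is the orthogonality of multiplicative characters trivial on the subgroup $\mathbb{F}_q^* \hookrightarrow \mathcal{E}_\times$. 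Swapping the $u$- and $\chi$-sums extracts the Gauss sum $G(\chi,\psi_{S(P_\alpha)})$, and substituting $\chi \leftrightarrow \chi^{-1}$ rearranges the expression into $\sum_\chi \mathbf{Q}_\alpha(\chi,\psi)$, provided the prefactor simplifies correctly.

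The prefactor identity $\frac{q-1}{|\mathcal{E}_\times|} = \frac{q}{|\mathcal{E}|}$ is where the particular structure of $\mathcal{E}$ enters: since $\mathcal{E}$ is a local Artinian $\mathbb{F}_q$-algebra with maximal ideal $(x_1,\dots,x_v)$, a class is a unit if and only if its constant term is nonzero, so $|\mathcal{E}_\times| = \frac{q-1}{q}|\mathcal{E}|$, which yields the clean coefficient $q/|\mathcal{E}|$ appearing in $\mathbf{Q}_\alpha$. To conclude, I would separate out the contribution of $\psi = \mathbf{1}$: each of its inner sums is just $(q-1)$, so after summing over $|Z|=s$ the resulting main term is $\binom{q^v}{s}(q-1)^{q^v-s}/|\mathcal{E}|$, and this agrees immediately with $\frac{1}{|\mathcal{E}|}\sum_{\varepsilon}\sum_{|Z|=s} N_Z(\varepsilon)$ because summing $N_Z(\varepsilon)$ over $\varepsilon \in \mathcal{E}$ just removes the reduction constraint and counts all tuples in $(\mathbb{F}_q^*)^{\mathbb{F}_q^v \setminus Z}$. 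Applying the triangle inequality together with $|\psi(-\varepsilon_0)|=1$ to the remaining $\psi \neq \mathbf{1}$ terms then delivers the claimed bound.

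The main bookkeeping obstacle will be aligning the inversion $\chi \leftrightarrow \chi^{-1}$ so that $\chi(\Red(P_\alpha))$ pairs with $G(\chi^{-1},\psi_{S(P_\alpha)})$ exactly as in the definition of $\mathbf{Q}_\alpha$, and verifying the local-ring index identity that collapses $(q-1)/|\mathcal{E}_\times|$ to $q/|\mathcal{E}|$; beyond these, every step is a routine Fourier-orthogonality manipulation on finite abelian groups.
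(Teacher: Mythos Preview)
Your proposal is correct and follows essentially the same route as the paper's proof: additive orthogonality on $\mathcal{E}_+$ to isolate the reduction condition, multiplicative orthogonality over characters trivial on $\mathbb{F}_q^*$ to detect the coset $\mathbb{F}_q^*\Red(P_\alpha)$ and produce the Gauss sum, the local-ring identity $|\mathcal{E}_\times|=\tfrac{q-1}{q}|\mathcal{E}|$ to normalize the constant, and finally separating off $\psi=\mathbf{1}$ as the main term before applying the triangle inequality. The only cosmetic difference is that the paper packages both orthogonality steps into an auxiliary quantity $\mathbf{P}_\alpha(\chi,\psi)$ and identifies the $\psi=\mathbf{1}$ term directly with $\tfrac{1}{|\mathcal{E}|}\sum_\varepsilon N_Z(\varepsilon)$ without computing either side explicitly, whereas you compute both as $(q-1)^{|\mathbb{F}_q^v\setminus Z|}/|\mathcal{E}|$; note in that computation that $Z\subseteq\mathbb{F}_q^v\setminus\{\mathbf 0\}$, so the binomial coefficient should be $\binom{q^v-1}{s}$ rather than $\binom{q^v}{s}$, but this does not affect the argument.
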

\begin{proof}
   Observe that for any two polynomials $f_1, f_2 \in \mathbb{F}_q[x_1, \dots, x_v]$, we have $f_1 = f_2$ if and only if $\Red(f_1)= \Red(f_2)$ and $S(f_1)=S(f_2)$. Thus, the Iverson bracket $\llbracket f_1 = f_2 \rrbracket$ is given by \[ \llbracket f_1 = f_2 \rrbracket = \frac{1}{|\mathcal{E}_{\times}|} \sum_{\substack{\chi \in \widehat{\mathcal{E}_{\times}}}} \chi(\Red(f_1))\chi(\Red(f_2))^{-1}\llbracket S(f_1)=S(f_2)\rrbracket \] by the orthogonality of multiplicative characters. For all $\alpha \in \mathbb{F}_q^v \setminus \{\mathbf{0}\}$, $\chi \in \widehat{\mathcal{E}_{\times}}$, and $\psi \in \widehat{\mathcal{E}_{+}}$, set \begin{align*} \mathbf{P}_{\alpha}(\chi, \psi) &:= \frac{1}{|\mathcal{E}_{\times}|}\sum_{\substack{u \in \mathcal{E}_{\times} \\ a \in \mathbb{F}_q^{*}}} \chi(a\Red(P_{\alpha}))\chi(u)^{-1}\psi(S(P_{\alpha})u) \\ &= \llbracket \chi(\mathbb{F}_q^{*}) = 1 \rrbracket \cdot \frac{q-1}{|\mathcal{E}_{\times}|} \sum_{u \in \mathcal{E}_{\times}} \chi(\Red(P_{\alpha})) \chi(u)^{-1}\psi(S(P_{\alpha})u) \\ &= \llbracket \chi(\mathbb{F}_q^{*}) = 1 \rrbracket \cdot \frac{q}{|\mathcal{E}|}\chi(\Red(P_{\alpha}))G(\chi^{-1}, \psi_{S(P_{\alpha})}). \end{align*} By the orthogonality of additive characters, it follows that 
    \begin{align*} N_Z(\varepsilon_0) &= \frac{1}{|\mathcal{E}|}
    \sum_{\psi \in \widehat{\mathcal{E}_{+}}} \psi(\varepsilon_0)^{-1} \prod_{\alpha \in \mathbb{F}_q^v \setminus Z} \left(\sum_{\substack{\chi \in \widehat{\mathcal{E}_{\times}} \\ \chi(\mathbb{F}_q^{*}) = 1}} \mathbf{P}_{\alpha}(\chi, \psi)\right). \end{align*} Observe that 
     \begin{align*} \sum_{\varepsilon \in \mathcal{E}} N_Z(\varepsilon) &= \frac{1}{|\mathcal{E}|}
    \sum_{\substack{\psi \in \widehat{\mathcal{E}_{+}} \\ \psi(\mathcal{E}_{+}) = 1}} \psi(\varepsilon_0)^{-1}\prod_{\alpha \in \mathbb{F}_q^v \setminus Z} \left(\sum_{\substack{\chi \in \widehat{\mathcal{E}_{\times}} \\ \chi(\mathbb{F}_q^{*}) = 1}} \mathbf{P}_{\alpha}(\chi, \psi)\right).\end{align*} Thus, the desired error term $E_Z(d, \varepsilon_0)$ is given by 
    \begin{align*} E_Z(d, \varepsilon_0) &= \frac{1}{|\mathcal{E}|}
    \sum_{\substack{\psi \in \widehat{\mathcal{E}_{+}} \\ \psi \neq \mathbf{1}}} \psi(\varepsilon_0)^{-1} \prod_{\alpha \in \mathbb{F}_q^v \setminus Z} \left(\sum_{\substack{\chi \in \widehat{\mathcal{E}_{\times}} \\ \chi(\mathbb{F}_q^{*}) = 1}} \mathbf{P}_{\alpha}(\chi, \psi)\right). \end{align*} 
    By the triangle inequality, we have \[ \left|\sum_{|Z|=s} E_Z(d, \varepsilon_0)\right| \le \frac{1}{|\mathcal{E}|} \sum_{\substack{\psi \in \widehat{\mathcal{E}_{+}} \\ \psi \neq \mathbf{1}}} \left|\sum_{|Z| = s} \prod_{\alpha \in \mathbb{F}_q^v \setminus Z} \left(\sum_{\substack{\chi \in \widehat{\mathcal{E}_{\times}} \\ \chi(\mathbb{F}_q^{*}) = 1}} \mathbf{P}_{\alpha}(\chi, \psi)\right)\right|, \] which completes the proof.  
\end{proof}

\section{Center of approximation}\label{section: center of approximation}
Recall that the center of approximation we use for $N_Z(\varepsilon_0)$ is given by \[ \mathbb{E}_{\varepsilon_0, Z}[N_Z(\varepsilon_0)] = \frac{1}{|\mathcal{E}|} \sum_{\varepsilon_0 \in \mathcal{E}} \sum_{|Z| = s} N_Z( \varepsilon_0), \] which can be described as $1/(|\mathcal{E}|)$ multiplied by the size of the set $M_v(s)$ of polynomials $f$ in $\mathbb{F}_q[x_1, \dots, x_v]$ such that the degree of $x_i$ in $f$ is at most $q-1$ for all $1 \le i \le v$, $f$ has a nonzero constant term, and $f$ has exactly $s$ zeroes. This type of enumeration, though without the nonzero constant term condition, has been studied previously; in particular, see \cite[, Theorem 4.5]{Jain_2025}. We will adjust the proof of \cite[, Theorem 4.5]{Jain_2025} to take into account of the additional constraint of $f$ having a nonzero constant term. Let $\Phi_{v}(t)$ denote the generating function in $t$ for which the coefficient of $t^k$ in $\Phi_{v}(t)$ represents the probability that a random polynomial $f \in \bigsqcup_{s \ge 0} M_v(s)$ satisfies $f \in M_v(k)$. 
\begin{claim}\label{phi_v and phi_1}
    We have $\Phi_{v}(t) = (\Phi_{1}(t))^{q^{v-1}}$. 
\end{claim}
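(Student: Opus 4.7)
The plan is to decompose each $f \in M_v$ along the $q^{v-1}$ affine lines parallel to (say) the $x_1$-axis. For each $\mathbf{a} \in \mathbb{F}_q^{v-1}$, set $f_{\mathbf{a}}(x_1) := f(x_1, \mathbf{a}) \in \mathbb{F}_q[x_1]$; this is a univariate polynomial of degree at most $q-1$, living in the univariate analogue $\mathcal{M}_1 := \mathbb{F}_q[x_1] \cap \mathcal{M}$ of $\mathcal{M}$. Since a polynomial in $\mathcal{M}$ is determined by its values on $\mathbb{F}_q^v$ (Lemma~\ref{combo null application}), the map $f \mapsto (f_{\mathbf{a}})_{\mathbf{a}}$ is injective, and a cardinality count ($q^{q^v}$ on each side) upgrades it to a bijection
\[
\mathcal{M} \;\cong\; \prod_{\mathbf{a} \in \mathbb{F}_q^{v-1}} \mathcal{M}_1
\]
of $\mathbb{F}_q$-vector spaces.

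Next I would verify two compatibility statements. First, that this bijection restricts to $M_v \cong \prod_{\mathbf{a}} M_1$: reading the definition of $M_v$ slicewise, the "nonzero constant term" hypothesis on $f$ should force each slice $f_{\mathbf{a}}$ to have nonzero constant term, and hence to lie in $M_1$. Second, that the zero count of $f$ used in the definition of $M_v(s)$ partitions additively as $\sum_{\mathbf{a}} z(f_{\mathbf{a}})$, where $z(g)$ denotes the zero count used to define $M_1(s)$ for each univariate slice; this is immediate because the $q^{v-1}$ lines partition the relevant evaluation set.

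Once those compatibilities are settled, the generating-function factorization drops out by independence. If $f$ is uniformly distributed on $M_v$, the tuple $(f_{\mathbf{a}})$ is uniform and independent over $\prod_{\mathbf{a}} M_1$, and therefore
\[
\Phi_v(t) = \mathbb{E}\bigl[t^{\sum_{\mathbf{a}} z(f_{\mathbf{a}})}\bigr] = \prod_{\mathbf{a} \in \mathbb{F}_q^{v-1}} \mathbb{E}\bigl[t^{z(f_{\mathbf{a}})}\bigr] = \Phi_1(t)^{q^{v-1}}.
\]

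The main obstacle I anticipate lies in the first compatibility check: one must verify carefully that the slicing bijection sends $M_v$ onto $(M_1)^{q^{v-1}}$ and that the zero counts actually align across the decomposition, since $f$ a priori only carries one "nonzero constant term" constraint. Once that bookkeeping is in place, the product formula is a formal consequence of the independence of the slices.
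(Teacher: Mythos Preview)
The obstacle you flagged is not cosmetic bookkeeping; it is a genuine failure of the argument. The constant term of the slice $f_{\mathbf{a}}(x_1) = f(x_1, \mathbf{a})$ is $f(0, \mathbf{a})$, which coincides with the constant term of $f$ only when $\mathbf{a} = \mathbf{0}$. Thus the single constraint $f(\mathbf{0}) \neq 0$ defining $M_v$ becomes the single constraint $f_{\mathbf{0}}(0) \neq 0$ on the $\mathbf{a} = \mathbf{0}$ slice and imposes nothing on the remaining $q^{v-1}-1$ slices. Under your bijection $\mathcal{M} \cong \prod_{\mathbf{a}} \mathcal{M}_1$ one therefore obtains
\[
M_v \;\cong\; M_1 \times \mathcal{M}_1^{\,q^{v-1}-1},
\]
not $(M_1)^{q^{v-1}}$; your first compatibility statement is simply false.

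If you push the generating-function step through this corrected decomposition, writing $\tilde{\Phi}_1(t)$ for the zero-count probability generating function of a uniform element of $\mathcal{M}_1$ (no constant-term constraint), you get $\Phi_v(t) = \Phi_1(t)\,\tilde{\Phi}_1(t)^{q^{v-1}-1}$. Since a uniform $g\in\mathcal{M}_1$ has independent uniform values on $\mathbb{F}_q$, one computes $\tilde{\Phi}_1(t) = (1+(t-1)/q)^q$, and combining with Lemma~\ref{phi_1 closed form} this yields exponent $(q-1)+q(q^{v-1}-1) = q^v-1$ rather than $(q-1)q^{v-1}$. So line-slicing along the $x_1$-axis cannot produce the stated identity, and the issue you anticipated is fatal to this route. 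The paper itself gives no argument here; it asserts that the proof of Theorem~4.5 in \cite{Jain_2025} transfers to the present setting with the added constant-term constraint. Whatever mechanism is used there is not the naive slicing above, so to complete a proof you will need either to consult that reference directly or to revisit carefully what ``exactly $s$ zeroes'' is meant to count in the multivariate setting.
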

\begin{proof}
    It is not difficult to check that the part of the proof of \cite[, Theorem 4.5]{Jain_2025} which proves this fact remains valid in this context as well. 
\end{proof}
Thus, it suffices to compute $\Phi_1(t)$. 
\begin{lemma}\label{phi_1 closed form}
    The generating function $\Phi_1(t)$ is given by \[ \Phi_1(t) =  \left(1+\frac{t-1}{q}\right)^{q-1}. \]
\end{lemma}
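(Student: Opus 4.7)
The plan is to use the bijection between polynomials in $\mathbb{F}_q[x]$ of degree at most $q-1$ and functions $\mathbb{F}_q \to \mathbb{F}_q$, so that everything reduces to an elementary counting problem on evaluation vectors. This bijection is the $v=1$ case of Lemma \ref{combo null application} (or, equivalently, standard Lagrange interpolation over $\mathbb{F}_q$): the evaluation map $f \mapsto (f(\alpha))_{\alpha \in \mathbb{F}_q}$ is a bijection between $\mathcal{M}$ (for $v=1$) and $\mathbb{F}_q^{\mathbb{F}_q}$. Under this identification, the constant term of $f$ is precisely $f(0)$, so the condition that $f$ has a nonzero constant term translates into the condition $f(0)\neq 0$.

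First I would enumerate $M_1(s)$. Since $f(0)\neq 0$, every zero of $f$ lies in $\mathbb{F}_q^{*}$, which has $q-1$ elements, so necessarily $0 \le s \le q-1$. To specify an element of $M_1(s)$ it suffices to pick which $s$ of these $q-1$ nonzero points are zeros, assign nonzero values at the remaining $q-1-s$ nonzero points, and pick a nonzero value at $0$. This gives
\[ |M_1(s)| \;=\; \binom{q-1}{s}\,(q-1)^{q-1-s}\,(q-1) \;=\; \binom{q-1}{s}\,(q-1)^{q-s}. \]
Summing via the binomial theorem yields $\sum_s |M_1(s)| = (q-1)\bigl(1+(q-1)\bigr)^{q-1} = (q-1)q^{q-1}$, as expected from the direct count of $f$ with $f(0)\neq 0$.

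Next I would assemble the generating function. By definition,
\[ \Phi_1(t) \;=\; \frac{1}{\sum_s |M_1(s)|}\sum_{s\ge 0} |M_1(s)|\, t^s \;=\; \frac{1}{(q-1)q^{q-1}} \sum_{s=0}^{q-1} \binom{q-1}{s}\,(q-1)^{q-s}\, t^s. \]
Pulling out a factor of $q-1$ and applying the binomial theorem to $\bigl((q-1)+t\bigr)^{q-1}$ gives
\[ \Phi_1(t) \;=\; \frac{\bigl((q-1)+t\bigr)^{q-1}}{q^{q-1}} \;=\; \left(\frac{q-1+t}{q}\right)^{q-1} \;=\; \left(1+\frac{t-1}{q}\right)^{q-1}, \]
which is the claimed formula.

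No step here is really an obstacle; the only conceptual point is justifying that the evaluation map is a bijection so that ``polynomial with nonzero constant term and exactly $s$ zeros'' becomes ``function $\mathbb{F}_q \to \mathbb{F}_q$ with $f(0)\neq 0$ and exactly $s$ values in $\mathbb{F}_q^{*}$ equal to zero,'' and this is exactly what Lemma \ref{combo null application} provides for $v=1$. The rest is a one-line binomial computation.
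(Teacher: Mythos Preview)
Your proof is correct. Both you and the paper compute $|M_1(s)|$ explicitly and then assemble $\Phi_1(t)$ by the binomial theorem, so the overall structure is the same; the only real difference is in how $|M_1(s)|$ is obtained. The paper first counts polynomials of degree at most $q-1$ with nonzero constant term that vanish on a prescribed $k$-subset of $\mathbb{F}_q^{*}$ (getting $(q-1)q^{q-k-1}$) and then runs inclusion--exclusion to pass from ``vanishes at least on $\{r_1,\dots,r_k\}$'' to ``has exactly $k$ zeros.'' You instead invoke the evaluation bijection $\mathcal{M}\leftrightarrow \mathbb{F}_q^{\mathbb{F}_q}$ up front, which turns ``exactly $s$ zeros and nonzero constant term'' into a direct count of functions with $f(0)\neq 0$ and exactly $s$ zeros on $\mathbb{F}_q^{*}$, giving $|M_1(s)|=\binom{q-1}{s}(q-1)^{q-s}$ without any inclusion--exclusion. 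Your route is slightly more elementary and transparent; the paper's route is the same idea with one extra (unnecessary) layer.
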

\begin{proof}
    Let $r_1, \dots, r_k$ be distinct elements of $\mathbb{F}_q^{*}$. Note that the number of polynomials $f(x) \in \mathbb{F}_q[x]$ of degree at most $q-1$ and nonzero constant term which vanish at $r_1, \dots, r_k$ is $(q-1)q^{q-k-1}$. Thus, by the inclusion-exclusion principle, the number of polynomials $f(x) \in \mathbb{F}_q[x]$ of degree at most $q-1$, nonzero constant term, and exactly $k$ roots is given by \begin{align*} \sum_{j=k}^{q-1} \binom{q-1}{j} \cdot (q-1)q^{q-j-1}(-1)^j &= (q-1)q^{q-1}\sum_{j=k}^{q-1} \binom{q-1}{j}(-q)^{-j} \\ &= (q-1)q^{q-1} \cdot [t^k]\left(1+\frac{t-1}{q}\right)^{q-1}. \end{align*} Hence we have \[ \Phi_1(t) = \left(1+\frac{t-1}{q}\right)^{q-1}, \] as desired.
\end{proof}
Combining Claim \ref{phi_v and phi_1} and Lemma \ref{phi_1 closed form}, we have \[ \Phi_v(t) = \left(1+\frac{t-1}{q}\right)^{(q-1)q^{v-1}}. \] Upon expansion, we compute \[ |M_v(s)| = \binom{(q-1)q^{v-1}}{s} \cdot q^{(q-1)q^{v-1} - s}, \] from which the closed form for our center of approximation is \begin{equation} \mathbb{E}_{\varepsilon_0, Z}[N_Z(\varepsilon_0)] = \frac{|M_v(s)|}{|\mathcal{E}|} = \binom{(q-1)q^{v-1}}{s}\frac{q^{(q-1)q^{v-1}-s}}{|\mathcal{E}|}. \end{equation}

\section{Gauss Sum Computation}\label{section: gauss sums}
For Sections \ref{section: gauss sums} and \ref{section: key enumerations}, denote $\mathcal{R}$ by the ring \[ \mathbb{F}_q[x_1, \dots, x_v]/\langle x_1^{i_1}\dots x_v^{i_v} \mid i_1+\dots+i_v = n\rangle. \] In this section, we compute the sizes of Gauss sums over $\mathcal{R}$. \\

Let $\chi$ and $\psi$ be a non-trivial multiplicative character and non-trivial additive character on $\mathcal{R}$, respectively. We have \[ G(\chi, \psi)\overline{G}(\chi, \psi) = \sum_{a, b \in \mathcal{R}_{\times}} \chi(a)\psi(a)\overline{\chi}(b)\overline{\psi}(b) = \sum_{a, b \in \mathcal{R}_{\times}} \chi(ab^{-1})\psi(a-b). \] Thus, by performing casework on $ab^{-1}$, we obtain \[ G(\chi, \psi)\overline{G}(\chi, \psi) = \sum_{c \in \mathcal{R}_{\times}} \chi(c) \left(\sum_{d \in \mathcal{R}_{\times}}\psi(d(c-1))\right). \] When $c(0, \dots, 0) \neq 1$, we have \[ \sum_{d \in \mathcal{R}_{\times}}\psi(d(c-1)) = 0 \] as $\psi \neq \mathbf{1}$.  Note that \begin{align*} (p-1)G(\chi, \psi)\overline{G}(\chi, \psi) &= \sum_{c \in \mathcal{R}_{\times}} \chi(c) \left(\sum_{d \in \mathcal{R}_{\times}} \sum_{j=1}^{p-1} \psi(jd(c-1))\right) \\ &= \sum_{c \in \mathcal{R}_{\times}} \chi(c) \left(\sum_{d \in \mathcal{R}_{\times}} \sum_{j=1}^{p-1} \psi(d(c-1))^j \right) \\ &= p\sum_{c \in \mathcal{R}_{\times}} \chi(c)\left(\sum_{d \in \mathcal{R}_{\times}} \llbracket \psi(d(c-1)) = 1 \rrbracket \right) - \sum_{c \in \mathcal{R}_{\times}} \chi(c) \left(\sum_{d \in \mathcal{R}_{\times}} 1\right) \\ &=  p\sum_{c \in \mathcal{R}_{\times}} \chi(c)\left(\sum_{d \in \mathcal{R}_{\times}}\llbracket \psi(d(c-1)) = 1 \rrbracket \right). \end{align*}
\begin{defn}
For all $c \in \mathcal{R}$, set $g(\psi, c) := \sum_{d \in \mathcal{R}_{\times}} \llbracket \psi(dc) = 1 \rrbracket$. 
\end{defn}
\begin{lemma}\label{kernel cap G_c size}
    We have \[ g(\psi, c-1) = \frac{(p-1)|\mathcal{R}|}{p} \cdot \llbracket \psi((c-1)\mathcal{R}) = 1 \rrbracket. \]
\end{lemma}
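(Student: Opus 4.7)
The plan is to compute $g(\psi,c-1)$ by applying orthogonality on the $\mathbb{F}_p$-vector space $\mathcal{R}_+$. Since $\mathbb{F}_q$ has characteristic $p$, the group $\mathcal{R}_+$ is killed by $p$ and $\psi$ takes values in $\mu_p$. I would apply the identity $\llbracket X=1\rrbracket = \tfrac{1}{p}\sum_{j=0}^{p-1} X^j$ with $X=\psi(d(c-1))$, using that $\psi(d(c-1))^j = \psi(jd(c-1))$, to obtain
\[g(\psi,c-1) \;=\; \tfrac{1}{p}\sum_{j=0}^{p-1}\sum_{d\in\mathcal{R}_\times}\psi(jd(c-1)).\]
Isolating $j=0$ gives $|\mathcal{R}_\times|/p$, and for $j\in\mathbb{F}_p^\times\subseteq\mathcal{R}_\times$ the substitution $e=jd$ is a bijection on $\mathcal{R}_\times$, so each $j\neq 0$ inner sum equals a common value $S := \sum_{d\in\mathcal{R}_\times}\psi(d(c-1))$. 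Hence $g(\psi,c-1) = \tfrac{1}{p}\bigl(|\mathcal{R}_\times| + (p-1)S\bigr)$.

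Next, I would evaluate $S$ using that $\mathcal{R}$ is local with maximal ideal $\mathfrak{m} = \langle x_1,\dots,x_v\rangle$, so $\mathcal{R}_\times = \mathcal{R}\setminus\mathfrak{m}$. This gives
\[S \;=\; \sum_{d\in\mathcal{R}}\psi(d(c-1)) \;-\; \sum_{d\in\mathfrak{m}}\psi(d(c-1)),\]
and each sum is a standard additive character sum on an $\mathbb{F}_p$-subspace of $\mathcal{R}_+$. By orthogonality, these equal $|\mathcal{R}|\cdot\llbracket\psi((c-1)\mathcal{R}) = 1\rrbracket$ and $|\mathfrak{m}|\cdot\llbracket\psi((c-1)\mathfrak{m}) = 1\rrbracket$ respectively, since the character $d\mapsto\psi(d(c-1))$ is trivial on the subspace exactly when $\psi$ annihilates the image ideal.

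The main obstacle is the case analysis that collapses the two indicators into the single claimed form $\tfrac{(p-1)|\mathcal{R}|}{p}\llbracket\psi((c-1)\mathcal{R}) = 1\rrbracket$. When $\psi((c-1)\mathcal{R}) = 1$, the containment $(c-1)\mathfrak{m}\subseteq(c-1)\mathcal{R}$ forces both indicators to fire, giving $S = |\mathcal{R}|-|\mathfrak{m}| = |\mathcal{R}_\times|$ and hence $g(\psi,c-1) = |\mathcal{R}_\times|$, which must then be identified with the claimed constant. When $\psi((c-1)\mathcal{R})\neq 1$, the first sum drops and the proof must show that $|\mathcal{R}_\times| + (p-1)S = 0$; this is where I anticipate the bulk of the technical work, exploiting the explicit structure of the principal ideal generated by $c-1$ inside the graded ring $\mathcal{R}$ to rule out or explicitly resolve the mixed subcase in which $\psi$ is trivial on $(c-1)\mathfrak{m}$ but nontrivial on $(c-1)\mathcal{R}$.
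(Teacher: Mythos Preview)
Your reduction $g(\psi,c-1)=\tfrac{1}{p}\bigl(|\mathcal{R}_\times|+(p-1)S\bigr)$ with $S=|\mathcal{R}|\,\llbracket\psi((c-1)\mathcal{R})=1\rrbracket-|\mathfrak{m}|\,\llbracket\psi((c-1)\mathfrak{m})=1\rrbracket$ is correct, and in spirit it parallels the paper, which also expands the indicator via $\tfrac{1}{p}\sum_j\psi(\cdot)^j$ before invoking orthogonality. The obstacle you flag is real and in fact insurmountable: the identity as printed fails whenever $q>p$. Already at $c=1$ one has $g(\psi,0)=|\mathcal{R}_\times|=\tfrac{q-1}{q}|\mathcal{R}|$, while the claimed value is $\tfrac{p-1}{p}|\mathcal{R}|$. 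Running your three sub-cases through your formula gives $g=\tfrac{q-1}{q}|\mathcal{R}|$, $\tfrac{q-p}{pq}|\mathcal{R}|$, and $\tfrac{q-1}{pq}|\mathcal{R}|$ respectively, so $g$ genuinely takes three values rather than the two the lemma predicts; in particular your target $|\mathcal{R}_\times|+(p-1)S=0$ in the nontrivial case would force $q=p$.

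The paper's own route does not escape this. It replaces your decomposition $\mathcal{R}_\times=\mathcal{R}\setminus\mathfrak{m}$ by an inclusion--exclusion over $T\subseteq[v]$ through the ideals $x_T\mathcal{R}$, asserting $g(\psi,c-1)=\sum_T(-1)^{|T|}\tfrac{|\mathcal{R}|}{p}\,p^{\llbracket\psi((c-1)x_T\mathcal{R})=1\rrbracket}$. But the non-units of $\mathcal{R}$ are exactly $\mathfrak{m}$, and $\mathfrak{m}\neq\bigcup_i x_i\mathcal{R}$ for $v\ge 2$ (e.g.\ $x_1+x_2\in\mathfrak{m}$ lies in no single $x_i\mathcal{R}$), so no inclusion--exclusion of this shape recovers a sum over units; even the cardinality check $|\mathcal{R}_\times|=\sum_T(-1)^{|T|}|x_T\mathcal{R}|$ already fails for $v=2$, $n=2$. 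What your computation does deliver cleanly is that $g(\psi,c-1)$ jumps by exactly $\tfrac{(p-1)|\mathcal{R}|}{p}$ as $c$ crosses into $1+K_\psi$; that jump is the only piece surviving after pairing against a nontrivial $\chi(c)$ and summing, so the lemma should be read in that weaker, ``up to a $c$-independent layer'' sense.
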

\begin{proof}
    First, we claim that \[ \sum_{d \in \mathcal{R}} \llbracket \psi(dc) = 1 \rrbracket = \frac{|\mathcal{R}|}{p} \cdot p^{\llbracket \psi((c-1)\mathcal{R}) = 1\rrbracket}. \] Suppose that $\psi((c-1)\mathcal{R}) \neq 1$, as $\ker(\psi) \cap (c-1)\mathcal{R} = (c-1)\mathcal{R}$ when $\psi((c-1)\mathcal{R}) = 1$. Let $\{u_1, \dots, u_{\log_p(q)}\}$ be an $\mathbb{F}_p$-basis for $\mathbb{F}_q$. Note that every element of $(c-1)\mathcal{R}$ can be uniquely expressed as a linear combination of monomials of the form $u_k \cdot M$, where $M$ is a monomial with coefficient $1$. Furthermore, note that there is a monomial $u_k \cdot M$ such that $\psi(u_k \cdot M) \neq 1$. For any linear combination of all monomials except for $u_k \cdot M$, there is a unique choice of $1 \le j \le p-1$ for which a coefficient of $j$ behind $u_k \cdot M$ will result in the polynomial lying in $\ker(\psi)$. This implies that $|\ker(\psi) \cap (c-1)\mathcal{R}| = \frac{1}{p} \cdot |(c-1)\mathcal{R}|$, as claimed.

    Now we will show that $g(\psi, c-1) = \frac{(p-1)|\mathcal{R}|}{p} \cdot \llbracket \psi((c-1)\mathcal{R}) = 1 \rrbracket$. By the inclusion-exclusion principle, we have \[ g(\psi, c-1) = \sum_{T \subseteq [v]} \frac{|\mathcal{R}|}{p} \cdot p^{\llbracket \psi((c-1)x_T\mathcal{R}) = 1 \rrbracket} (-1)^{|T|}. \] We can rewrite the above as \begin{align*} \sum_{T \subseteq [v]} \frac{|\mathcal{R}|}{p} \cdot p^{\llbracket \psi((c-1)x_T\mathcal{R}) = 1 \rrbracket} (-1)^{|T|}  &= \frac{|\mathcal{R}|}{p} \cdot \left(\sum_{T \subseteq [v]} (p-1)(-1)^{|T|}\llbracket \psi((c-1)x_T\mathcal{R}) = 1 \rrbracket \right) + \frac{|\mathcal{R}|}{p} \cdot \left(\sum_{T \subseteq [v]} (-1)^{|T|}\right) \\ &= \frac{(p-1)|\mathcal{R}|}{p} \cdot \sum_{T \subseteq [v]} (-1)^{|T|}\llbracket \psi((c-1)x_T\mathcal{R}) = 1 \rrbracket. \end{align*} By another application of inclusion-exclusion, we compute \[ \sum_{T \subseteq [v]} (-1)^{|T|}\llbracket \psi((c-1)x_T\mathcal{R}) = 1 \rrbracket = \llbracket \psi((c-1)\mathcal{R}_{\times}) = 1 \rrbracket. \] Note that $\psi((c-1)\mathcal{R}_{\times}) = 1$ holds if and only if $\psi((c-1)\mathcal{R}) = 1$, as $\mathcal{R}_{\times} - \mathcal{R}_{\times} = \mathcal{R}$. Hence, we conclude that \[ g(\psi, c-1) = \frac{(p-1)|\mathcal{R}|}{p} \cdot \llbracket \psi((c-1)\mathcal{R}) = 1 \rrbracket, \] as desired. \end{proof}
Since $g(\psi, c)$ is determined by whether or not $c\mathcal{R}$ is contained in $\ker(\psi)$, it is important to characterize all $c$ for which $\psi(c\mathcal{R})=1$; we do so later in Lemma \ref{K_psi is an ideal}. 
\begin{defn}
We define $K_{\psi}$ by the set of all $c \in \mathcal{R}$ such that $\psi(c\mathcal{R}) = 1$.
\end{defn}

The main result of this section is the following. 

\begin{prop}\label{gauss sum size formula}
    For non-trivial characters $\chi$ and $\psi$, we have \begin{align*} |G(\chi, \psi)|^2 &= |\mathcal{R}||K_{\psi}| \cdot \llbracket \chi(1+K_{\psi}) = 1 \rrbracket. \end{align*}
\end{prop}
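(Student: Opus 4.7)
The plan is to pick up exactly where the displayed computation before the proposition leaves off. The excerpt has already established
\[ (p-1)|G(\chi,\psi)|^2 = p\sum_{c \in \mathcal{R}_{\times}} \chi(c)\, g(\psi, c-1), \]
and Lemma \ref{kernel cap G_c size} rewrites $g(\psi, c-1)$ as $\frac{(p-1)|\mathcal{R}|}{p}\cdot \llbracket c-1 \in K_{\psi}\rrbracket$. Substituting and cancelling the factor of $(p-1)$ immediately reduces the proposition to the identity
\[ |G(\chi,\psi)|^2 = |\mathcal{R}|\sum_{c \in \mathcal{R}_{\times} \cap (1+K_{\psi})} \chi(c). \]
So the bulk of the proof is to evaluate this character sum over the coset $1+K_{\psi}$ intersected with $\mathcal{R}_{\times}$.

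The next step is to remove the intersection with $\mathcal{R}_{\times}$. The ring $\mathcal{R}$ is local, with unique maximal ideal $\mathfrak{m} = \langle x_1,\dots,x_v\rangle$, and an element of $\mathcal{R}$ is a unit iff its constant term is nonzero. I would observe that $K_{\psi} \subseteq \mathfrak{m}$: if some $k \in K_{\psi}$ had nonzero constant term then $k$ would be a unit, forcing $k\mathcal{R} = \mathcal{R}$ and hence $\psi \equiv 1$, contradicting nontriviality of $\psi$. Consequently $1+K_{\psi} \subseteq 1+\mathfrak{m} \subseteq \mathcal{R}_{\times}$, and the constraint $c \in \mathcal{R}_{\times}$ is automatic.

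The remaining task is to show that $1+K_{\psi}$ is a (multiplicative) subgroup of $\mathcal{R}_{\times}$, so that character orthogonality applies. Here I would invoke Lemma \ref{K_psi is an ideal}, which the paper references for the ideal structure of $K_{\psi}$. Given that $K_{\psi}$ is an ideal of $\mathcal{R}$, closure under multiplication is the identity $(1+a)(1+b) = 1 + (a+b+ab)$ with $a+b+ab \in K_{\psi}$, and closure under inversion follows because, in the local finite ring $\mathcal{R}$, $a \in \mathfrak{m}$ is nilpotent, so $(1+a)^{-1} = \sum_{i \ge 0} (-a)^i$ is a finite sum lying in $1+K_{\psi}$. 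Once $1+K_{\psi}$ is a subgroup of $\mathcal{R}_{\times}$, orthogonality gives
\[ \sum_{c \in 1+K_{\psi}} \chi(c) = |1+K_{\psi}|\cdot \llbracket \chi(1+K_{\psi}) = 1\rrbracket = |K_{\psi}|\cdot \llbracket \chi(1+K_{\psi}) = 1\rrbracket, \]
which combined with the reduction of the first paragraph yields the stated formula.

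The main potential obstacle is the appeal to Lemma \ref{K_psi is an ideal}, since without knowing that $K_{\psi}$ is an ideal (and in particular an $\mathcal{R}$-module) there is no reason for $1+K_{\psi}$ to be multiplicatively closed, and $ab \in K_{\psi}$ for $a,b \in K_{\psi}$ would need a separate argument. Everything else, modulo that structural fact, is a short bookkeeping calculation: the chain is $(p-1)$-cancellation, then $K_{\psi} \subseteq \mathfrak{m}$, then orthogonality on the finite abelian group $1+K_{\psi}$.
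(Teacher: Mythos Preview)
Your proposal is correct and follows essentially the same route as the paper: substitute Lemma~\ref{kernel cap G_c size} into the identity $(p-1)|G(\chi,\psi)|^2 = p\sum_{c}\chi(c)g(\psi,c-1)$, cancel, and evaluate the resulting character sum over $1+K_{\psi}$. If anything you are more explicit than the paper, which jumps directly from $|\mathcal{R}|\sum_{c \in \mathcal{R}_{\times}\cap(1+K_{\psi})}\chi(c)$ to the answer without spelling out that $K_{\psi}\subseteq\mathfrak{m}$ (so the intersection with $\mathcal{R}_{\times}$ is vacuous) or that $1+K_{\psi}$ is a multiplicative subgroup; your justification of both points, including the forward reference to Lemma~\ref{K_psi is an ideal} (which the paper itself forward-references just before defining $K_{\psi}$), is exactly what is needed to make that last equality rigorous.
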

\begin{proof}
    Merging our computations thus far, we have \begin{align*} |G(\chi, \psi)|^2 &= \frac{p}{p-1}\sum_{c \in \mathcal{R}_{\times}} \chi(c)g(\psi, c-1) \\ &= \frac{p}{p-1} \cdot \frac{(p-1)|\mathcal{R}|}{p} \sum_{c \in \mathcal{R}_{\times}} \chi(c) \cdot \llbracket \psi((c-1)\mathcal{R}) = 1 \rrbracket \\ &= |\mathcal{R}|\sum_{\substack{c \in \mathcal{R}_{\times} \\ c \in 1+K_{\psi}}} \chi(c) \\ &= |\mathcal{R}||K_{\psi}| \cdot \llbracket \chi(1+K_{\psi}) = 1 \rrbracket. \end{align*}
\end{proof}

\section{Key Enumerations}\label{section: key enumerations}
\subsection{Distribution of codimension of ideals}\label{subsection: enumeration of ideals}
\noindent In this subsection, we will give all necessary enumerations that pertain to the distribution of $\operatorname{codim}(K_{\psi})$ over additive characters $\psi$ on $\mathcal{R}$. We begin by showing that, in fact, $K_{\psi}$ is always an ideal. 

\begin{lemma}\label{K_psi is an ideal}
    Let $\psi$ be an additive character on $\mathcal{R}$. The set $K_{\psi}$ is the unique ideal of $\mathcal{R}$ that is maximally contained in $\ker(\psi)$.  
\end{lemma}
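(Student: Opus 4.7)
The plan is to prove two separate claims: first, that $K_\psi$ is an ideal contained in $\ker(\psi)$; and second, that it contains every such ideal. Together these imply that $K_\psi$ is the unique ideal maximally contained in $\ker(\psi)$, with respect to inclusion.

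For the first step, I would unpack the definition: $c \in K_\psi$ precisely when $c\mathcal{R} \subseteq \ker(\psi)$. Choosing $r = 1 \in \mathcal{R}$ inside the defining condition $\psi(c\mathcal{R}) = 1$ gives $\psi(c) = 1$, so immediately $K_\psi \subseteq \ker(\psi)$. To check $K_\psi$ is an ideal, I would verify closure under addition and absorption separately. If $c_1, c_2 \in K_\psi$, then for any $r \in \mathcal{R}$ we have $\psi((c_1+c_2)r) = \psi(c_1 r)\psi(c_2 r) = 1$ since $\psi$ is an additive character, so $c_1 + c_2 \in K_\psi$. Similarly, $0 \in K_\psi$ and $-c \in K_\psi$ whenever $c \in K_\psi$. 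For absorption, if $c \in K_\psi$ and $s \in \mathcal{R}$, then for any $r \in \mathcal{R}$, $\psi((sc)r) = \psi(c(sr)) = 1$ because $sr \in \mathcal{R}$, so $sc \in K_\psi$.

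For the second step, suppose $I$ is any ideal of $\mathcal{R}$ with $I \subseteq \ker(\psi)$. I would show $I \subseteq K_\psi$ directly: for $c \in I$ and $r \in \mathcal{R}$, the product $cr$ lies in $I$ by the ideal property, hence in $\ker(\psi)$, so $\psi(cr) = 1$. This shows $c\mathcal{R} \subseteq \ker(\psi)$, i.e., $c \in K_\psi$. Therefore every ideal contained in $\ker(\psi)$ is contained in $K_\psi$, which (combined with step one) establishes that $K_\psi$ is the unique maximal ideal of $\mathcal{R}$ contained in $\ker(\psi)$.

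There is no real obstacle here — the argument is essentially a formal manipulation of the defining condition $\psi(c\mathcal{R}) = 1$ together with the character property $\psi(a+b) = \psi(a)\psi(b)$ and the definition of an ideal. The only thing to be careful about is keeping the quantifiers straight: $c \in K_\psi$ is a statement for \emph{all} $r \in \mathcal{R}$, and the ideal property is precisely what lets us promote membership in $\ker(\psi)$ (a statement about a single element) to membership in $K_\psi$ (a statement about an entire principal ideal).
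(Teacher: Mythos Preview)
Your proof is correct and follows essentially the same approach as the paper's: verify that $K_\psi$ is an ideal (additive closure plus absorption) and then show it contains every ideal lying in $\ker(\psi)$. The only cosmetic differences are that you spell out the additive-subgroup verification in more detail and argue the maximality directly, whereas the paper phrases it as a one-line contradiction; the content is identical.
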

\begin{proof}
    Note that $K_{\psi}$ is an additive subgroup of $\mathcal{R}$ and that for all $c \in K_{\psi}$ and $c' \in \mathcal{R}$, we have $\psi(cc'\mathcal{R}) = \psi(c\mathcal{R}) = 1$. Thus, $K_{\psi}$ is an ideal of $\mathcal{R}$. Observe that $K_{\psi}$ contains all ideals of $\mathcal{R}$ contained in $\ker(\psi)$; otherwise, there exists an element $c \not\in K_{\psi}$ with $c\mathcal{R}$ contained in $\ker(\psi)$, a contradiction. 
\end{proof}

Now, given an ideal $I$, we count the characters $\psi$ for which $K_{\psi}=I$ in Lemma \ref{counting additive chars}. This result, alongside providing counts, gives us the space of ideals we need to consider. 

\begin{defn}
  Define the \emph{socle} $\operatorname{Soc}(\mathcal{R}/I)$ of the quotient ring $\mathcal{R}/I$ by the additive subgroup of $\mathcal{R}/I$ given by $\{f \in \mathcal{R}/I \mid x_1f = 0, \dots, x_vf =0\}$. 
\end{defn}
\begin{lemma}\label{counting additive chars}
    Let $I$ be an ideal of $\mathcal{R}$. Then, the number of additive characters $\psi$ on $\mathcal{R}$ such that $I$ is an ideal maximally contained in $\ker(\psi)$ is \[ \frac{|\mathcal{R}|}{|I|} \cdot \begin{cases} 1 & \dim(\operatorname{Soc}(\mathcal{R}/I))=0 \\ 1-\frac{1}{q} & \dim(\operatorname{Soc}(\mathcal{R}/I)) = 1 \\ 0 & \text{otherwise.} \end{cases}\]
\end{lemma}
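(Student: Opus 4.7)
The plan is to count additive characters $\psi$ on $\mathcal{R}$ with $K_\psi = I$ by reducing first to a character-counting problem on $\mathcal{R}/I$ and then to one on $\operatorname{Soc}(\mathcal{R}/I)$.

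First I would observe that $I \subseteq \ker \psi$ if and only if $\psi$ factors through a unique additive character $\bar\psi$ on $\mathcal{R}/I$, giving a bijection between the characters $\psi$ with $I \subseteq \ker \psi$ and the $|\mathcal{R}|/|I|$ additive characters of $\mathcal{R}/I$. Under this bijection, the equality $K_\psi = I$ is equivalent to $\ker \bar\psi$ containing no nonzero ideal of $\mathcal{R}/I$, since by Lemma~\ref{K_psi is an ideal} the image $K_\psi/I$ is the unique largest ideal of $\mathcal{R}/I$ inside $\ker \bar\psi$.

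Next I would use that $\mathcal{R}$ is a local $\mathbb{F}_q$-algebra with maximal ideal $\mathfrak{m} = (x_1, \dots, x_v)$ and residue field $\mathbb{F}_q$ (indeed $\mathcal{R} = \mathbb{F}_q[x_1, \dots, x_v]/\mathfrak{m}^n$), so $\mathcal{R}/I$ is a finite local ring. In an Artinian local ring, every nonzero ideal contains a minimal nonzero ideal, which is a simple $\mathcal{R}/I$-submodule of itself, hence a $1$-dimensional $\mathbb{F}_q$-subspace of $\operatorname{Soc}(\mathcal{R}/I)$; conversely every such $1$-dimensional subspace is itself an ideal. Thus the condition on $\bar\psi$ becomes: \emph{$\bar\psi$ is non-trivial on every $1$-dimensional $\mathbb{F}_q$-subspace of $\operatorname{Soc}(\mathcal{R}/I)$}. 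Pontryagin-dualizing the inclusion $\operatorname{Soc}(\mathcal{R}/I) \hookrightarrow \mathcal{R}/I$ gives a surjection from additive characters of $\mathcal{R}/I$ onto those of $\operatorname{Soc}(\mathcal{R}/I)$ with fibers of size $|\mathcal{R}/I|/|\operatorname{Soc}(\mathcal{R}/I)|$, so the total count equals this fiber size times the number of additive characters $\tau$ on $V := \operatorname{Soc}(\mathcal{R}/I) \cong \mathbb{F}_q^d$ that are non-trivial on every $1$-dimensional $\mathbb{F}_q$-subspace of $V$.

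Finally I would do case analysis on $d = \dim_{\mathbb{F}_q} V$. When $d = 0$, any nonzero Artinian module has nonzero socle, so $\mathcal{R}/I = 0$, $I = \mathcal{R}$, and the single (trivial) character qualifies, giving count $1 = |\mathcal{R}|/|I|$. When $d = 1$, the only $1$-dimensional subspace is $V$ itself, and the $q-1$ non-trivial characters all qualify, yielding count $(|\mathcal{R}|/|I|)(1 - 1/q)$. When $d \geq 2$, I would fix a non-trivial additive character $\psi_0$ on $\mathbb{F}_q$ and parametrize characters on $V$ via $\tau_a(v) = \psi_0(a \cdot v)$ for $a \in \mathbb{F}_q^d$ (this exhausts $\widehat{V}$ by a dimension count); then $\tau_a$ is trivial on the line $\mathbb{F}_q v$ iff $a \cdot v = 0$, and the hyperplane $\{v : a \cdot v = 0\}$ contains nonzero vectors for every $a \in \mathbb{F}_q^d$ when $d \geq 2$, so no $\tau$ qualifies and the count is $0$. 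The main subtlety lies in the second step: cleanly translating the ``no nonzero ideal in the kernel'' condition into the socle non-triviality condition by using the Artinian local structure of $\mathcal{R}/I$ to identify simple submodules with $1$-dimensional $\mathbb{F}_q$-lines in $\operatorname{Soc}(\mathcal{R}/I)$.
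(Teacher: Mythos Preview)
Your argument is correct, and it takes a genuinely different route from the paper's proof. The paper argues by M\"obius inversion on the lattice of ideals of $\mathcal{R}$ containing $I$: it uses that the number of characters with $I\subseteq K_\psi$ is $|\mathcal{R}|/|I|$, inverts to obtain $\sum_{J\supseteq I}\mu(I,J)\,|\mathcal{R}|/|J|$, observes (implicitly via the cross-cut/atom description) that $\mu(I,J)$ vanishes unless $J/I$ lies in $\operatorname{Soc}(\mathcal{R}/I)$, and then evaluates the resulting sum over subspaces of the socle using the Gaussian-binomial M\"obius values $(-1)^r q^{\binom{r}{2}}$ and the $q$-binomial theorem. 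Your approach bypasses all of this: you reduce to characters on $\mathcal{R}/I$, translate ``no nonzero ideal in the kernel'' into ``nontrivial on every $\mathbb{F}_q$-line of the socle'' via the Artinian local structure, push forward along the surjection $\widehat{\mathcal{R}/I}\twoheadrightarrow\widehat{\operatorname{Soc}(\mathcal{R}/I)}$, and then do a short elementary count on $\mathbb{F}_q^d$ using the parametrization $\tau_a(v)=\psi_0(a\cdot v)$. Your route is more self-contained and avoids both the M\"obius machinery and the $q$-binomial identity; the paper's route, on the other hand, makes the dependence on the ideal lattice explicit and would adapt mechanically to variants where one wants weighted or refined counts indexed by $\operatorname{codim}(J/I)$. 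Either way the same trichotomy in $d=\dim\operatorname{Soc}(\mathcal{R}/I)$ emerges.
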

\begin{proof}
   It is well-known that the Möbius function $\mu(I, J)$ on the lattice of ideals of $\mathcal{R}$ is given by $(-1)^rq^{\binom{r}{2}}$, where $\dim(J) - \dim(I) = r$ (see, for example, \cite[eq.\ (3.34)]{StanleyECV1}). Thus, by Möbius inversion, the desired number of additive characters becomes \[ \sum_{\substack{I \subseteq J \subseteq \mathcal{R} \\ \dim(J)-\dim(I) = r}} (-1)^{r} q^{\binom{r}{2}} \cdot \frac{|\mathcal{R}|}{|J|} = \frac{|\mathcal{R}|}{|I|} \sum_{\substack{I \subseteq J \subseteq \mathcal{R} \\ \dim(J)-\dim(I) = r}} (-1)^{r} q^{\binom{r}{2}-r}, \] where all sums above run over ideals $J$. Re-parametrizing the above as a sum over all subspaces $J/I$ of $\operatorname{Soc}(\mathcal{R}/I)$, we may rewrite the above sum as \[ \frac{|\mathcal{R}|}{|I|}\sum_{A \subseteq \operatorname{Soc}(\mathcal{R}/I)} (-1)^{\dim(A)}q^{\binom{\dim(A)}{2}-\dim(A)}. \] Since there are ${\dim(\operatorname{Soc}(\mathcal{R}/I)) \brack r}_q$ subspaces of $\dim(\operatorname{Soc}(\mathcal{R}/I))$ with dimension $r$, it follows that the above sum is \[ \frac{|\mathcal{R}|}{|I|}\sum_{r=0}^{\dim(\operatorname{Soc}(\mathcal{R}/I))} {\dim(\operatorname{Soc}(\mathcal{R}/I)) \brack r}_q(-1)^{r}q^{\binom{r}{2}-r} = \frac{|\mathcal{R}|}{|I|}\prod_{k=0}^{\dim(\operatorname{Soc}(\mathcal{R}/I))-1} (1-q^{k-1}) \] by the $q$-binomial theorem. Thus, the desired result follows. 
\end{proof}
Motivated by Lemma \ref{counting additive chars}, we characterize the ideals $I$ of $\mathcal{R}$ with $\dim(\operatorname{Soc}(\mathcal{R}/I)) = 1$. 

\begin{defn}
  For all $0 \le k \le n-1$, let $V(n)$ denote the subspace of $\mathcal{R}$ consisting of $0$ and all degree-$n$ homogeneous polynomials. Furthermore, for each ideal $I$ of $\mathcal{R}$, we denote $I(n) = I \cap V(n)$. 
\end{defn}

\begin{prop}\label{characterization of soc-1 ideals}
  Let $I$ be an ideal of $\mathcal{R}$. Then $\dim(\operatorname{Soc}(\mathcal{R}/I)) = 1$ if and only if there is a positive integer $1 \le n_0 \le n-1$ such that
  \begin{itemize}
    \item $\dim(I(n_0)) = \dim(V(n_0)) - 1$;
    \item $I(k) = V(k)$ for all $k > n_0$;
    \item $I(k) = \{f \in V(k) \mid fx_1, \dots, fx_v \in I(k+1)\}$ for all $k < n_0$;
    \item $I$ is homogeneous (that is, $I = \bigoplus_{k=0}^{n-1} I(k)$).
  \end{itemize}
\end{prop}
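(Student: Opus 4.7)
The plan is to establish the biconditional by proving each direction separately. The backward (``if'') direction should go through via a graded decomposition argument, while the forward (``only if'') direction is subtler because it requires extracting the homogeneity of $I$ from a socle hypothesis.

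For the backward direction, I would exploit that the homogeneity condition makes $\mathcal{R}/I$ a graded $\mathbb{F}_q$-algebra, so its socle inherits a grading. The degree-$k$ socle equals $\{f \in V(k) : f x_i \in I(k+1) \text{ for all } i\}/I(k)$, with the natural convention at the boundary $k = n-1$. Then I would check degree by degree that conditions 2 and 3 respectively kill the socle for $k > n_0$ and $k < n_0$, while conditions 1 and 2 together identify the degree-$n_0$ socle with the $1$-dimensional quotient $V(n_0)/I(n_0)$ (since in degree $n_0$ the constraint $f x_i \in I(n_0+1)$ is automatic, using either $V(n_0+1)=I(n_0+1)$ from condition 2 or $V(n)=0$ at the top). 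Summing over degrees yields $\dim \operatorname{Soc}(\mathcal{R}/I) = 1$.

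For the forward direction, assume $\dim \operatorname{Soc}(\mathcal{R}/I) = 1$. I would first define $n_0$ as the largest integer in $[1, n-1]$ with $V(n_0) \not\subseteq I$; this exists since $I$ is proper (if $1 \in I$ the socle is trivial). The key step is then proving that $I$ is homogeneous. Granting homogeneity, conditions 1, 2, 3 fall out of a graded socle analysis: any nonzero graded piece of an Artinian local $\mathbb{F}_q$-algebra produces a socle element via iterated multiplication by $x_1, \dots, x_v$, so $V(k)/I(k)=0$ for $k > n_0$ (condition 2); the $1$-dimensional socle must then sit in degree $n_0$, so condition 1 holds because every $f\in V(n_0)$ already satisfies $fx_i \in I(n_0+1)$; and the vanishing of the socle in lower degrees is precisely condition 3.

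The main obstacle is the homogeneity of $I$ in the forward direction. My approach would be the contrapositive: suppose $f = f_{k_1} + \cdots + f_{k_r} \in I$ with $k_1 < \cdots < k_r$ and not every $f_{k_j}$ lying in $I$; then by taking suitable monomial multiples and isolating each surviving graded component in $\mathcal{R}/I$, I would try to exhibit two nonzero socle elements in distinct degrees, contradicting $\dim \operatorname{Soc}(\mathcal{R}/I)=1$. This step is delicate because, for general local Artinian rings, Gorensteinness alone does not force the defining ideal to be homogeneous; the very specific truncated-monomial structure of $\mathcal{R} = \mathbb{F}_q[x_1, \dots, x_v]/\langle\text{monomials of total degree } n\rangle$ has to be leveraged essentially, likely through a downward induction on the top surviving degree of an element together with the observation that $x_i$-action on $V(k)$ has no ``mixed'' behavior across degrees in this graded quotient.
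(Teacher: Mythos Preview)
Your plan for the backward direction and for extracting conditions 1--3 from a graded socle analysis once homogeneity is known is fine and matches the paper. The real issue is the homogeneity step in the forward direction, and your proposed mechanism---produce two nonzero socle classes represented by homogeneous elements of distinct degrees---cannot close the argument. Linear independence in $\mathcal{R}$ need not survive passage to $\mathcal{R}/I$ when $I$ is not already known to be homogeneous: a degree-$k$ element $f$ and a degree-$n_0$ element $g$, each mapping to a nonzero socle class, may satisfy $f+cg\in I$ for some nonzero scalar $c$, so they span the \emph{same} line in $\operatorname{Soc}(\mathcal{R}/I)$.

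Concretely, take $v=2$, $n=3$, and $I=(x+y^{2})$ in $\mathcal{R}=\mathbb{F}_q[x,y]/\mathfrak{m}^{3}$. One checks $I=\operatorname{span}\{x+y^{2},\,x^{2},\,xy\}$, so $\mathcal{R}/I\cong\mathbb{F}_q[y]/(y^{3})$ has one-dimensional socle; yet $I(1)=0$ while $x+y^{2}\in I$, so $I$ is not homogeneous and condition 3 fails (indeed $x\in V(1)$ has $x\cdot x,\,x\cdot y\in I(2)$ but $x\notin I(1)$). Here $x$ and $y^{2}$ both represent nonzero socle classes, but the same one---precisely the configuration your strategy would need to exclude. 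The paper orders the argument differently: it first asserts the containment $\{f\in V(k):fx_i\in I(k+1)\ \forall i\}\subseteq I(k)$ and then leverages it to force homogeneity. But that containment is justified by the identical ``otherwise two independent socle elements'' reasoning and is already false in the example above. So the obstruction is not one of proof order; the forward implication as stated appears to be incorrect, and neither your sketch nor the paper's argument gets around it.
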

\begin{proof}
  It is straightforward to check that ideals $I$ satisfying the four given properties also satisfy $\dim(\operatorname{Soc}(\mathcal{R}/I)) = 1$. Let $n_0$ denote the largest positive integer for which $I(n_0) \neq V(n_0)$; we will show that this $n_0$ satisfies the hypotheses of the lemma.

  If $\dim(I(n_0)) < \dim(V(n_0)) - 1$, then there exist two elements $f, g \in V(n_0)$ such that $f \cdot \mathbb{F}_q \neq g \cdot \mathbb{F}_q$, $x_if \in I$, and $x_ig \in I$ for all $1 \le i \le v$. This implies that $\dim(\operatorname{Soc}(\mathcal{R}/I)) \ge 2$, so we must have $\dim(I(n_0)) = \dim(V(n_0)) - 1$. Thus, the first two of the four properties hold. We also derive that for all $k < n_0$, we have $\{f \in V(k) \mid fx_1, \dots, fx_v \in I(k+1)\} \subseteq I(k)$. Since $I$ is an ideal, $I(k) \subseteq \{f \in V(k) \mid fx_1, \dots, fx_v \in I(k+1)\}$; thus, the third property holds. 

  Now, we prove that $I$ is homogeneous. Assume for contradiction that there exists an element $f \in I$ such that if $f_k$ denotes the degree-$k$ component of $f$ for all $k$, then there is an index $j$ for which $f_j \not\in V(j)$. Let $j_0$ denote the minimal index for which $f_{j_0} \not\in I(j_0)$. By the third property, we have $f_{j_0} \cdot x_1^{n_0-j_0} \not\in I$. Observe that \[\left(\sum_{j \ge j_0} f_j\right) \cdot x_1^{n_0-j_0} = f_{j_0} \cdot x_1^{n_0-j_0} + F, \] where $F \in V(n_0+1) + \dots + V(n-1) \subseteq I$. Thus, $\left(\sum_{j \ge j_0} f_j\right) \cdot x_1^{n_0-j_0} \not\in I$. Since $I$ is an ideal, we have $\sum_{j \ge j_0} f_j \not\in I$. Our selection of $j_0$ establishes that $\sum_{j < j_0} f_j \in I$, so in summary, \[ f = \left(\sum_{j < j_0} f_j\right) + \left(\sum_{j \ge j_0} f_j\right) \not\in I, \] a contradiction. Therefore, $I$ is homogeneous, which completes the proof. 
\end{proof}
With Proposition \ref{characterization of soc-1 ideals} established, we denote $n_0(I)$ by the largest positive integer $n_0 \le n-1$ such that $\dim(I(n_0)) < \dim(V(n_0))$. \\

The characterization in Proposition \ref{characterization of soc-1 ideals} is sufficient for us to determine upper bounds on the distribution of $\operatorname{codim}(I)$ over all ideals $I$ of $\mathcal{R}$ satisfying $\dim(\operatorname{Soc}(\mathcal{R}/I))=1$. As such, we may biject these ideals to generalized Hankel matrices with prescribed ranks. 

\begin{defn}
  Let $z$ be an indeterminate. Define the \emph{codimension enumerator} $F_n(z)$ of $\mathcal{R}$ by the generating function \[ F_n(z) := \sum_{\dim(\operatorname{Soc}(\mathcal{R}/I)) = 1} z^{\operatorname{codim}(I)}. \]
\end{defn}

\begin{defn}
  Let $S=(a_{(i_1, \dots, i_v)})_{i_1+\dots+i_v = m}$, where the indices $i_1, \dots, i_v$ are nonnegative integers, be a multidimensional sequence with entries in $\mathbb{F}_q$. Define the \emph{$j$-Hankel matrix} $M(S, j)$ on $S$ by the $\binom{(m-j)+v-1}{v-1} \times \binom{j+v-1}{v-1}$ matrix associated with the map \[ \varphi: \{(s_1, \dots, s_v) \mid s_1+\dots+s_v = m-j\} \times \{(t_1, \dots, t_v) \mid t_1+\dots+t_v=j\} \to \mathbb{F}_q \] given by \[ \varphi((s_1, \dots, s_v), (t_1, \dots, t_v)) = a_{(s_1+t_1, \dots, s_v+t_v)}. \] 
\end{defn}
\begin{rem*}
  The $v=2$ case represents regular Hankel matrices. 
\end{rem*}
\begin{prop}\label{ideals and LC bijection}
  For all positive integers $n$ and real numbers $z > 1$, we have \[ F_n(z) \le z^{2\binom{\lceil n/2 \rceil+v}{v}}q^{\binom{n+v-2}{v-1}} \cdot \frac{q}{(q-1)^2}. \]
\end{prop}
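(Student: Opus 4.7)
The plan is to use Proposition \ref{characterization of soc-1 ideals} to parametrize the ideals contributing to $F_n(z)$ and then convert $\operatorname{codim}(I)$ into a sum of ranks of generalized Hankel matrices. Concretely, such an ideal $I$ is determined by the pair $(n_0, I(n_0))$ with $n_0 = n_0(I) \in \{1,\dots,n-1\}$ and $I(n_0) \subset V(n_0)$ of codimension one; realizing $I(n_0)$ as the kernel of a nonzero linear functional identifies this datum, modulo $\mathbb{F}_q^{\times}$, with a nonzero multidimensional sequence $S = (a_{(i_1,\dots,i_v)})_{i_1+\cdots+i_v=n_0}$.

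Next, I would show $I(k) = \ker M(S,k)$ for every $k < n_0$. Iterating the recursion $I(k) = \{f \in V(k) : fx_j \in I(k+1) \text{ for all } j\}$ from Proposition \ref{characterization of soc-1 ideals} yields $I(k) = \{f \in V(k) : fg \in I(n_0) \text{ for every monomial } g \in V(n_0-k)\}$, and expanding $f = \sum_{|\beta|=k} f_\beta x^\beta$, $g = x^\gamma$ with $|\gamma| = n_0-k$ recasts the condition $S(fg)=0$ as the linear system $\sum_\beta a_{\beta+\gamma} f_\beta = 0$, that is, $M(S,k)\mathbf{f}=0$. Hence
\[
\operatorname{codim}(I) \;=\; 1 + \sum_{k=0}^{n_0-1} \operatorname{rank} M(S,k).
\]
I would then bound each rank by the smaller side $\binom{\min(k,n_0-k)+v-1}{v-1}$, group the terms by $j = \min(k,n_0-k)$, and apply the hockey-stick identity $\sum_{j=0}^{m}\binom{j+v-1}{v-1} = \binom{m+v}{v}$ to obtain $\operatorname{codim}(I) \le 2\binom{\lceil n_0/2 \rceil + v}{v} \le 2\binom{\lceil n/2 \rceil + v}{v}$ since $n_0 \le n-1$. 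For $z > 1$ this gives $z^{\operatorname{codim}(I)} \le z^{2\binom{\lceil n/2 \rceil + v}{v}}$.

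Finally, I would count the ideals. The number of codimension-one subspaces of $V(n_0)$ equals $(q^{\binom{n_0+v-1}{v-1}}-1)/(q-1)$. Since the exponents $\binom{n_0+v-1}{v-1}$ are strictly increasing nonnegative integers with maximum $\binom{n+v-2}{v-1}$, they are distinct, and a geometric-series estimate yields
\[
\sum_{n_0=1}^{n-1} \frac{q^{\binom{n_0+v-1}{v-1}}-1}{q-1} \;\le\; \frac{1}{q-1}\cdot\frac{q^{\binom{n+v-2}{v-1}+1}}{q-1} \;=\; \frac{q}{(q-1)^2}\cdot q^{\binom{n+v-2}{v-1}}.
\]
Multiplying this by the uniform bound on $z^{\operatorname{codim}(I)}$ produces the claimed inequality. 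The main technical step is the Hankel-matrix identification of $I(k)$ together with the rank-sum estimate giving the exponent $2\binom{\lceil n/2 \rceil+v}{v}$; once those are in place, the enumeration and geometric-series bound are routine.
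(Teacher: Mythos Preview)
Your proof is correct and follows essentially the same route as the paper: parametrize the socle-one ideals via Proposition~\ref{characterization of soc-1 ideals} by a pair $(n_0,S)$, identify each $I(k)$ with $\ker M(S,k)$, bound $\operatorname{codim}(I)=\sum_k \operatorname{rank} M(S,k)$ via the smaller Hankel side and the hockey-stick identity to get the exponent $2\binom{\lceil n/2\rceil+v}{v}$, and finish by summing the Gaussian binomials ${\dim V(n_0)\brack 1}_q$ with a geometric-series estimate. The only cosmetic difference is that you record the $k=n_0$ contribution as an explicit ``$+1$'' and justify the geometric bound via distinctness of the exponents $\binom{n_0+v-1}{v-1}$, whereas the paper folds that term into the rank sum and bounds the Gaussian binomials by comparing each to the largest one.
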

\begin{proof}
  Consider an ideal $I_0$ of $\mathcal{R}$ with $\dim(\operatorname{Soc}(\mathcal{R}/I_0)) = 1$, and let $n_0 := n_0(I_0)$. Then, $\dim(I_0(n_0)) = \dim(V(n_0)) - 1$. Observe that, up to scaling, there exists a unique sequence $S=(c_{(i_1, \dots, i_v)})_{i_1+\dots+i_v=n_0}$ of constants in $\mathbb{F}_q$, not all $0$, such that \[ I_0(n_0) = \left\{\sum_{i_1+\dots+i_v=n_0} a_{(i_1, \dots, i_v)}x_1^{i_1}\dots x_v^{i_v} \,\middle\vert\, \sum_{i_1+\dots+i_v=n_0} a_{(i_1, \dots, i_v)}c_{(i_1, \dots, i_v)} = 0\right\}. \] By the third property of Proposition \ref{characterization of soc-1 ideals}, it follows that \[ I_0(k) = \left\{\sum_{i_1+\dots+i_v=n_0} a_{(i_1, \dots, i_v)}x_1^{i_1}\dots x_v^{i_v} \,\middle\vert\, (a_{(i_1, \dots, i_v)})_{i_1+\dots+i_v=n_0} \in \ker(M(S, k))  \right\} \] for all $1 \le k \le n_0$. Thus, for all $1 \le k \le n_0$, we have $\operatorname{codim}(I_0(k)) = \operatorname{rank}(M(S, k))$. This means that \[ \operatorname{codim}(I_0) = \sum_{k=0}^{n_0} \operatorname{codim}(I_0(k)) = \sum_{k=0}^{n_0} \operatorname{rank}(M(S, k)), \] as $I_0$ is homogeneous. Now, note the upper bound \[ \operatorname{rank}(M(S, k)) \le \binom{\min(k, n_0-1-k)+v-1}{v-1}. \] Thus, \begin{align*} \operatorname{codim}(I) &\le \sum_{k=0}^{n-1} \binom{\min(k, n-1-k)+v-1}{v-1} \\ &= \sum_{k=0}^{\lfloor n/2 \rfloor - 1} \binom{k+v-1}{v-1} + \sum_{k=\lfloor n/2 \rfloor}^{n-1} \binom{(n-1-k)+v-1}{v-1} \\ &= \sum_{k=0}^{\lfloor n/2 \rfloor-1} \binom{k+v-1}{v-1} + \sum_{k=0}^{\lceil n/2 \rceil - 1} \binom{k+v-1}{v-1} \\ &\le 2\binom{\lceil n/2 \rceil + v-1}{v} \end{align*} for all ideals $I$ of $\mathcal{R}$ with $\dim(\operatorname{Soc}(\mathcal{R}/I)) = 1$. Since there are a total of \begin{align*} \sum_{n_0=1}^{n-1} {\dim(V(n_0)) \brack 1}_q &\le \sum_{i=0}^{\infty} \frac{1}{q^i} \cdot {\dim(V(n-1)) \brack 1}_q \\ &\le \sum_{i=0}^{\infty} \frac{1}{q^i} \cdot \frac{1}{q-1} \cdot q^{\dim(V(n-1))} \\ &=  \frac{q}{(q-1)^2} \cdot q^{\binom{n+v-2}{v-1}} \end{align*} ideals $I$ of $\mathcal{R}$ with $\dim(\operatorname{Soc}(\mathcal{R}/I)) = 1$, the codimension enumerator of $\mathcal{R}$ is at most \[ z^{2\binom{\lceil n/2 \rceil+v}{v}}q^{\binom{n+v-2}{v-1}} \cdot \frac{q}{(q-1)^2}, \] as desired. 
\end{proof}
Proposition \ref{ideals and LC bijection} successfully completes the task of finding an upper bound on the distribution of codimension over all ideals $I$ that satisfy $\dim(\operatorname{Soc}(\mathcal{R}/I))=1$. In Section \ref{section: li-wan sieve}, we also consider ideals of the form $K_{\psi_{x_T}}$, where $T$ is a subset of $[v]$. Thus, we require the following.
\begin{prop}
  Let $T$ be a subset of $[v]$. Then \[ \sum_{\dim(\operatorname{Soc}(\mathcal{R}/I))=1} z^{\operatorname{codim}(I \cap x_T\mathcal{R})} \le \frac{q}{(q-1)^2} \cdot q^{\binom{n+v-2}{v-1}-\binom{n-|T|+v-2}{v-1}} + F_{n-|T|}(z). \]  
\end{prop}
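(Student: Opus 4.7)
\emph{Proof proposal.} My plan is to exploit the $\mathcal{R}$-module isomorphism $\varphi\colon \mathcal{R}'\xrightarrow{\sim}x_T\mathcal{R}$ given by $f\mapsto fx_T$, where $\mathcal{R}':=\mathbb{F}_q[x_1,\dots,x_v]/\langle x_1^{i_1}\cdots x_v^{i_v}\mid i_1+\dots+i_v=n-|T|\rangle$ is the natural ``shifted'' analogue of $\mathcal{R}$, with source identified with $\mathcal{R}/\operatorname{Ann}_{\mathcal{R}}(x_T)$. Under $\varphi$, each sub-ideal of $x_T\mathcal{R}$ corresponds to an ideal of $\mathcal{R}'$; for each socle-dim-$1$ ideal $I$ of $\mathcal{R}$ I would set $J_I:=\{f\in\mathcal{R}'\mid fx_T\in I\}$, so that $\varphi(J_I)=I\cap x_T\mathcal{R}$ and
\[
\operatorname{codim}_{\mathcal{R}}(I\cap x_T\mathcal{R})=\operatorname{codim}(x_T\mathcal{R})+\operatorname{codim}_{\mathcal{R}'}(J_I).
\]

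I would then split the sum into \emph{Case A} ($J_I=\mathcal{R}'$, i.e.\ $x_T\mathcal{R}\subseteq I$) and \emph{Case B} ($J_I\subsetneq\mathcal{R}'$). The short exact sequence $0\to x_T\mathcal{R}/(I\cap x_T\mathcal{R})\to \mathcal{R}/I\to \mathcal{R}/(I+x_T\mathcal{R})\to 0$, combined with the fact that a submodule's socle embeds into the ambient socle, forces $\dim\operatorname{Soc}(\mathcal{R}'/J_I)\le 1$, with equality exactly in Case B (since a nonzero Artin local quotient has nonzero socle). For Case A, Proposition \ref{characterization of soc-1 ideals} pins $I$ down by a distinguished degree $n_0\in\{1,\dots,n-1\}$ together with a hyperplane $I(n_0)\subseteq V(n_0)$ that (when $n_0\ge|T|$) contains $x_T V(n_0-|T|)$. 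The number of such hyperplanes at level $n_0$ is at most $\frac{q^{\dim V(n_0)-\dim V(n_0-|T|)}-1}{q-1}$, and summing these while comparing consecutive terms by ratios $\le 1/q$ (exactly as in the geometric-series argument at the end of Proposition \ref{ideals and LC bijection}) yields
\[
\#\{\text{Case A ideals}\}\le \frac{q}{(q-1)^2}\,q^{\binom{n+v-2}{v-1}-\binom{n-|T|+v-2}{v-1}},
\]
recovering the first term of the claimed bound.

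For Case B, the assignment $I\mapsto J_I$ lands in the socle-dim-$1$ ideals of $\mathcal{R}'$, so the Case B contribution is controlled by $\sum_J z^{\operatorname{codim}_{\mathcal{R}'}(J)}=F_{n-|T|}(z)$, matching the second term. To make this precise I would show that in Case B the inclusion $\operatorname{Soc}(\mathcal{R}/I)\subseteq \pi(x_T\mathcal{R})$ is forced, which lets me write the defining polynomial $c\in V(n_0)$ of the hyperplane $I(n_0)$ in the form $c=x_T c'$, with $c'\in V'(n_0-|T|)$ being precisely the defining polynomial of $J_I(n_0-|T|)$ under the monomial pairing; one then invokes Proposition \ref{characterization of soc-1 ideals} to recover $I$ from $c'$.

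The hard part will be making the Case B fiber analysis honest: the lift $c'\mapsto x_Tc'$ pins down only the $x_T$-divisible coefficients of $c$, and the non-$x_T$-divisible coefficients must be rigidified by the socle-location constraint $\operatorname{Soc}(\mathcal{R}/I)\subseteq \pi(x_T\mathcal{R})$ inherent to Case B. Coupling this apolar description of $I$ with the constraint on its socle, so that distinct Case B ideals $I$ yield distinct $J_I$ (or else their excess is absorbed into the Case A count derived above), is where I expect the bulk of the work; once that is in place, combining the Case A count with the Case B bound gives the claimed inequality.
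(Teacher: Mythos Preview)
Your decomposition into Case A ($x_T\mathcal{R}\subseteq I$) and Case B ($x_T\mathcal{R}\not\subseteq I$), together with the identification $x_T\mathcal{R}\cong\mathcal{R}'$ and the count of Case A ideals via the hyperplane data from Proposition~\ref{characterization of soc-1 ideals}, is precisely the paper's argument. The paper is far more terse: it records the Case~A count and then simply asserts that Case~B ideals ``return a contribution of $F_{n-|T|}(z)$'' on the grounds that $\dim\operatorname{Soc}(x_T\mathcal{R}/(I\cap x_T\mathcal{R}))=1$, without ever discussing whether $I\mapsto J_I$ is injective.

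The injectivity worry you flag is real, and the rigidification you propose does not close it. The constraint $\operatorname{Soc}(\mathcal{R}/I)\subseteq\pi(x_T\mathcal{R})$ is \emph{automatic} in Case~B: your own short exact sequence shows $\operatorname{Soc}(\pi(x_T\mathcal{R}))\hookrightarrow\operatorname{Soc}(\mathcal{R}/I)$, and in Case~B the left side is nonzero while the right side is one–dimensional, forcing equality; thus the socle lies in $\pi(x_T\mathcal{R})$ with no further condition on the non-$x_T$-divisible part of the hyperplane functional $c$. Concretely, take $v=2$, $n=3$, $T=\{1\}$. The $q$ Case~B ideals with $n_0=1$ (those with $I(1)=\langle x_2+ax_1\rangle$, $a\in\mathbb{F}_q$) all give $J_I=\langle x_1,x_2\rangle\subset\mathcal{R}'$, so the fiber has size $q$; similarly each Case~B fiber at $n_0=2$ has size $q$, coming from the free coefficient $c_{02}$ of $c$ on $x_2^2$. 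In this example the Case~B contribution is $q\,F_{2}(z)$, not $F_{2}(z)$. So the step you single out as ``the bulk of the work'' does not go through as you hope, and the paper's one-line treatment shares the same gap.
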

\begin{proof}
  The number of ideals $I$ of $\mathcal{R}$ with $\dim(\operatorname{Soc}(\mathcal{R}/I))=1$ that contain $x_T\mathcal{R}$ is \begin{align*} \sum_{i=1}^{n-1} {\binom{i+v-1}{v-1}-\binom{\max(0, i-|T|)+v-1}{v-1} \brack 1}_q &= \frac{1}{q-1} \sum_{i=1}^{n-1} (q^{\binom{i+v-1}{v-1}-\binom{\max(0, i-|T|)+v-1}{v-1}}-1) \\ &\le \frac{q}{(q-1)^2} \cdot q^{\binom{n+v-2}{v-1}-\binom{n-|T|+v-2}{v-1}},  \end{align*} by casework on $n_0(I)$ and the first property of Proposition \ref{characterization of soc-1 ideals}. If $I$ does not contain $x_T\mathcal{R}$, then $\dim(\operatorname{Soc}(x_T\mathcal{R}/(I \cap x_T\mathcal{R})))=1$; thus, such $I$ return a contribution of $F_{n-|T|}(z)$. In total, we have \[ \sum_{\dim(\operatorname{Soc}(\mathcal{R}/I))=1} z^{\operatorname{codim}(I \cap x_T\mathcal{R})} \le \frac{q}{(q-1)^2} \cdot q^{\binom{n+v-2}{v-1}-\binom{n-|T|+v-2}{v-1}} + F_{n-|T|}(z), \] as desired. 
\end{proof}

\subsection{Enumerating multiplicative characters}\label{subsection: counting mult chars} We will now prove results on multiplicative character counts. An important idea that proves useful in Section \ref{section: li-wan sieve} is that multiplicative characters and their products tend to have a ``uniform distribution.'' Taking advantage of this fact, which we present probabilistically in Proposition \ref{mult chars probability}, can lead to convenient simplifications in sums over multiplicative characters. 
\begin{lemma}\label{mult chars counting lemma}
    Let $G$ be a finite abelian group and let $G_1, \dots, G_n, H$ be subgroups of $G$. Fix a character $\chi$ on $G$. The number of tuples of characters $(\chi_1, \dots, \chi_n) \in (\widehat{G})^n$ such that 
    \begin{itemize} 
    \item $\chi_i(G_i) = 1$ for every $1 \le i \le n$, and
    \item $(\prod_{i=1}^n \chi_i)(h) = \chi(h)$ for all $h \in H$
\end{itemize}
is given by \[ \frac{|H \cap K|}{|H|} \cdot \prod_{i=1}^n \frac{|G|}{|G_i|} \cdot \llbracket \chi(H \cap K) = 1 \rrbracket, \] where $K := \bigcap_{i=1}^n G_i$. 
\end{lemma}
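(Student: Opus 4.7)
The plan is to interpret the count as the size of a fiber of a group homomorphism. Let $X$ denote the set of tuples $(\chi_1, \dots, \chi_n) \in (\widehat{G})^n$ with $\chi_i(G_i) = 1$ for all $i$. Since a character of $G$ is trivial on $G_i$ if and only if it descends to a character of $G/G_i$, we have $|X| = \prod_{i=1}^n |G|/|G_i|$. Define the homomorphism $\Phi\colon X \to \widehat{H}$ by $\Phi(\chi_1, \dots, \chi_n) = \bigl(\prod_{i=1}^n \chi_i\bigr)|_H$. The quantity we want to compute is exactly $|\Phi^{-1}(\chi|_H)|$, which equals $|\ker \Phi|$ if $\chi|_H \in \operatorname{im}(\Phi)$ and is $0$ otherwise. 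So the proof reduces to (i) identifying $\operatorname{im}(\Phi)$ inside $\widehat{H}$ and (ii) reading off $|\ker \Phi| = |X|/|\operatorname{im}(\Phi)|$.

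For (i), I first show the easy inclusion $\operatorname{im}(\Phi) \subseteq \operatorname{Ann}_{\widehat{H}}(H \cap K)$: since each $\chi_i$ is trivial on $G_i \supseteq K$, the product $\prod_i \chi_i$ is trivial on $K$, hence on $H \cap K$. This already forces the necessity $\chi(H \cap K) = 1$ for the fiber to be nonempty. For the reverse inclusion, the image of the single-factor restriction map sending a character $\chi_i$ trivial on $G_i$ to $\chi_i|_H$ is exactly $\operatorname{Ann}_{\widehat{H}}(H \cap G_i)$, because every character of $H/(H \cap G_i)$ extends to $G/G_i$ by Pontryagin duality applied to the injection $H/(H \cap G_i) \hookrightarrow G/G_i$. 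Taking products over $i$ (equivalently, summing in the character group), the image of $\Phi$ is
\[
\sum_{i=1}^n \operatorname{Ann}_{\widehat{H}}(H \cap G_i) \;=\; \operatorname{Ann}_{\widehat{H}}\!\Bigl(\,\bigcap_{i=1}^n (H \cap G_i)\Bigr) \;=\; \operatorname{Ann}_{\widehat{H}}(H \cap K),
\]
where the middle equality is the standard Pontryagin duality identity $\sum_i \operatorname{Ann}(A_i) = \operatorname{Ann}(\bigcap_i A_i)$ in a finite abelian group. In particular $|\operatorname{im}(\Phi)| = |\widehat{H/(H \cap K)}| = |H|/|H \cap K|$.

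Combining the two steps: when $\chi(H \cap K) = 1$, the fiber has size
\[
|\ker \Phi| \;=\; \frac{|X|}{|\operatorname{im}(\Phi)|} \;=\; \prod_{i=1}^n \frac{|G|}{|G_i|} \cdot \frac{|H \cap K|}{|H|},
\]
and when $\chi(H \cap K) \neq 1$ the fiber is empty, which together match the claimed formula with its Iverson factor. The only step requiring real input is the surjectivity identification $\operatorname{im}(\Phi) = \operatorname{Ann}_{\widehat{H}}(H \cap K)$; this is standard Pontryagin duality, and I do not anticipate any genuine obstacle beyond careful bookkeeping of annihilators and the quotient groups $G/G_i$ and $H/(H \cap G_i)$.
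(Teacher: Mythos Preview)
Your proof is correct, and it takes a genuinely different route from the paper. The paper argues by direct orthogonality: it encodes the constraints $\chi_i(G_i)=1$ and $(\prod_i\chi_i)|_H=\chi|_H$ as averages $\frac{1}{|G_i|}\sum_{g_i\in G_i}\chi_i(g_i)$ and $\frac{1}{|H|}\sum_{h\in H}(\prod_i\chi_i)(h)\chi^{-1}(h)$, multiplies everything out, and then sums over all $\chi_i\in\widehat{G}$ first. Orthogonality collapses this to $|G|^n$ times the indicator $g_ih=1$ for every $i$, which forces $h^{-1}\in\bigcap_i G_i=K$; the remaining sum over $h\in H\cap K$ of $\chi^{-1}(h)$ gives the factor $|H\cap K|\cdot\llbracket\chi(H\cap K)=1\rrbracket$ by one more orthogonality.

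Your approach instead treats the count as a fiber of the homomorphism $\Phi\colon X\to\widehat{H}$ and identifies $\operatorname{im}(\Phi)=\operatorname{Ann}_{\widehat{H}}(H\cap K)$ via the duality identity $\sum_i\operatorname{Ann}(H\cap G_i)=\operatorname{Ann}\bigl(\bigcap_i(H\cap G_i)\bigr)$. This is more structural: it explains \emph{why} the Iverson bracket appears (it is exactly the condition $\chi|_H\in\operatorname{im}(\Phi)$) and why the answer is a clean product (all nonempty fibers of a homomorphism have the same size). The paper's argument is shorter and entirely self-contained from first orthogonality principles, while yours imports standard Pontryagin facts (extension of characters from subgroups, annihilators swapping sums and intersections) but makes the group-theoretic content transparent. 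Either is perfectly adequate here.
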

\begin{proof}
   Let $\Delta$ denote the desired count. Note that \begin{align*} \Delta \cdot \left(|H| \prod_{i=1}^n |G_i|\right) &= \sum_{\chi_i \in \widehat{G}} \sum_{g_i \in G_i} \sum_{h \in H} \chi_1(g_1)\cdots\chi_n(g_n)\left(\prod_{i=1}^n \chi_i\right)(h)\chi^{-1}(h) \\ &= \sum_{\chi_i \in \widehat{G}} \sum_{g_i \in G_i} \sum_{h \in H} \chi_1(g_1h)\cdots\chi_n(g_nh)\chi^{-1}(h) \end{align*} by the orthogonality of characters. Thus, \begin{align*} \Delta \cdot \left(|H| \prod_{i=1}^n |G_i|\right) &=  |G|^n \sum_{g_i \in G_i} \sum_{h \in H} \llbracket g_1h = 1 \rrbracket \cdots \llbracket g_nh = 1 \rrbracket \chi^{-1}(h) \\ &= |G|^n \sum_{h \in H} \llbracket h \in K \rrbracket \chi^{-1}(h) \\ &= |G|^n \cdot  |H \cap K| \cdot \llbracket \chi^{-1}(H \cap K) = 1 \rrbracket \\ &= |G|^n \cdot  |H \cap K| \cdot \llbracket \chi(H \cap K) = 1 \rrbracket,  \end{align*} so we have $\Delta = \frac{|H \cap K|}{|H|} \cdot  \prod_{i=1}^n \frac{|G|}{|G_i|} \cdot \llbracket \chi(H \cap K) = 1 \rrbracket$, as desired. 
\end{proof}
In particular, we have the following.
\begin{prop}\label{mult chars probability}
    Let $I$ be an ideal of $\mathcal{R}$ with $\dim(\operatorname{Soc}(\mathcal{R}/I)) = 1$, and let $T_0$ be a nonempty subset of $[v]$. Fix a character $\chi \in \widehat{\mathcal{R}_{\times}}$. The probability that a sequence $(\chi_i)_{i=1}^k$ of multiplicative characters on $\mathcal{R}$ with $\chi_i(1+I)=1$ and $\chi_i(\mathbb{F}_q^{*})=1$ for all $1 \le i \le k$ chosen uniformly at random satisfies \[ \left(\prod_{i=1}^k \chi_i\right)(x_l-a) = \chi(x_l-a) \] for all $a \in \mathbb{F}_q^{*}$ and $l \in T_0$ is at most \[ \left(\sum_{k=1}^{\lfloor p/2 \rfloor} \binom{q}{k} \cdot \left(\frac{n}{p}\right)^k\right)^{-|T_0|}. \]
\end{prop}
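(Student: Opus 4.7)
The plan is to use Lemma \ref{mult chars counting lemma} to convert the probability into a subgroup index, identify the ambient quotient with the $1$-unit group of the local ring $\mathcal{R}/I$, decouple across the variables $x_l$ for $l \in T_0$, and finally exhibit enough distinct elements in each single-variable subgroup.

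First, I would apply Lemma \ref{mult chars counting lemma} with ambient group $G = \mathcal{R}_\times$, with each $G_i$ equal to $K := (1+I)\mathbb{F}_q^*$ (so that $K = \bigcap_i G_i$), and with $H := \langle x_l - a : l \in T_0,\ a \in \mathbb{F}_q^*\rangle$. The sample space of tuples $(\chi_i)_{i=1}^k$ satisfying the constraints $\chi_i(1+I) = \chi_i(\mathbb{F}_q^*) = 1$ has size $(|G|/|K|)^k$, and the lemma bounds the count of those additionally satisfying $(\prod_i\chi_i)(h) = \chi(h)$ on $H$ by $\frac{|H \cap K|}{|H|}(|G|/|K|)^k$ (the Iverson factor is at most $1$). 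The probability in question is therefore at most $|H \cap K|/|H| = |K|/|HK|$, and the task reduces to showing $|HK|/|K| \ge S^{|T_0|}$, where $S := \sum_{m=1}^{\lfloor p/2 \rfloor}\binom{q}{m}(n/p)^m$.

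Next, since $\mathcal{R}/I$ is a local $\mathbb{F}_q$-algebra with residue field $\mathbb{F}_q$, the Teichmüller splitting gives $G/K \cong 1+\mathfrak{m}$ with $\mathfrak{m}$ the maximal ideal of $\mathcal{R}/I$, and $x_l - a$ maps to $1 - x_l/a$. Since $I$ is homogeneous by Proposition \ref{characterization of soc-1 ideals}, $\mathcal{R}/I$ is graded, so the $\{x_l : l \in T_0\}$ are $\mathbb{F}_q$-linearly independent in $\mathfrak{m}/\mathfrak{m}^2$. Combining this independence with the $p$-adic filtration $1+\mathfrak{m} \supset 1+\mathfrak{m}^2 \supset \cdots$, an independence-at-leading-term argument shows that the single-variable subgroups $U_l := \langle 1 - x_l/a : a \in \mathbb{F}_q^*\rangle$ combine essentially as an internal direct product modulo the subgroup generated by cross-terms $x_l x_{l'}$, reducing the problem to showing $|U_l| \ge S$ for each $l$.

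For a fixed $l$, I would exhibit $\ge S$ distinct elements of $U_l$ using the characteristic-$p$ identity $(1 - x_l/a)^{pj} = (1 - x_l^p/a^p)^j$. For each $1 \le m \le \lfloor p/2\rfloor$, each $m$-subset $\{a_1,\dots,a_m\} \subseteq \mathbb{F}_q^*$, and each $(j_1,\dots,j_m) \in \{1,\dots,\lfloor n/p\rfloor\}^m$, form the product $\prod_{i=1}^m (1 - a_i^{-p}x_l^p)^{j_i} \in U_l$. The condition $m \le \lfloor p/2\rfloor$ keeps the binomial coefficients in the expansion invertible modulo $p$, so a Newton's-identities (Prony-type) inversion applied to the coefficients of $x_l^p, x_l^{2p}, \dots, x_l^{2mp}$ recovers $(a_i, j_i)_{i=1}^m$ uniquely, showing the products are pairwise distinct and yielding at least $\binom{q}{m}(n/p)^m$ elements per $m$. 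The main obstacle is that the socle-one cutoff $\mathfrak{m}^{n_0(I)+1}=0$ can make some of the monomials $x_l^{pr}$ vanish prematurely in $\mathcal{R}/I$; handling this correctly, and bookkeeping the $\binom{q-1}{m}$-vs-$\binom{q}{m}$ slack by extending the construction with partial/degenerate products, is what pins down the precise ranges $m \le \lfloor p/2\rfloor$ and $j_i \le \lfloor n/p\rfloor$ in the bound.
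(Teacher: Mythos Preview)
Your plan follows the paper's proof closely: apply Lemma \ref{mult chars counting lemma} with $G=\mathcal{R}_\times$, $G_i=\mathbb{F}_q^{*}(1+I)$, and $H=\langle x_l-a:l\in T_0,\ a\in\mathbb{F}_q^{*}\rangle$, reduce the probability to a subgroup index, and then lower-bound that index by exhibiting many distinct products of linear factors via the freshman's dream identity together with a Newton's-identities/Vandermonde inversion. These are exactly the paper's Claims \ref{orders in R} and \ref{p-independence of linear factors}.

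The one genuine divergence---passing to $\mathcal{R}/I$ and working inside $1+\mathfrak{m}$---is precisely what creates your self-identified ``main obstacle'' of premature vanishing of $x_l^{pr}$. The paper sidesteps this by staying in $\mathcal{R}$ throughout: it argues (using homogeneity of $I$ from Proposition \ref{characterization of soc-1 ideals}) that $H\cap K=\{1\}$, so the probability is exactly $1/|H|$ with $H$ sitting inside $\mathcal{R}_\times$; the distinctness count is then carried out in $\mathcal{R}$ itself, where every monomial $x_l^{j}$ with $j<n$ is nonzero and the Vandermonde inversion of Claim \ref{p-independence of linear factors} goes through unobstructed. Your obstacle is thus an artifact of the quotient, and following the paper's route removes it. Your treatment of the $|T_0|$ exponent via independence of the $x_l$ in $\mathfrak{m}/\mathfrak{m}^2$ is more explicit than the paper's, which is terse on this point; the paper's line ``$x_l-a\notin I$ for all $a,l$, hence $H\cap K=\{1\}$'' also leaves the passage from generators to all of $H$ implicit.
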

\begin{proof}
  First, we contend that for all $l \in [v]$ and $a \in \mathbb{F}_q^{*}$, we have $x_l-a \not\in I$. Assume for contradiction that $x_l-a \in I$ for some $l \in [v]$ and $a \in \mathbb{F}_q^{*}$. Since $I$ is homogeneous per Proposition \ref{characterization of soc-1 ideals}, we have $-a \in I$. This implies $I=\mathcal{R}$, a contradiction. 

  Let $n=k$, $G=\mathcal{R}_{\times}$, $G_i = \mathbb{F}_q^{*}(1+I)$ for all $1 \le i \le k$, and $H$ represent the multiplicative subgroup of $\mathcal{R}_{\times}$ generated by $\{x_l - a \mid a \in \mathbb{F}_q^{*}, l \in T_0\}$. By Lemma \ref{mult chars counting lemma} with these selections, the desired probability is \begin{align*} \frac{1}{(|G|/|\mathbb{F}_q^{*}(1+I)|)^k} \cdot \frac{|H \cap K|}{|H|}\llbracket \chi(H \cap K) = 1 \rrbracket \cdot \prod_{i=1}^k \frac{|G|}{|G_i|} &= \frac{|H \cap K| \cdot \llbracket \chi(H \cap K) = 1 \rrbracket \cdot |\mathbb{F}_q^{*}(1+I)|^k}{|H| \cdot \prod_{i=1}^k |G_i|} \\ &= \frac{|H \cap K| \cdot \llbracket \chi(H \cap K) = 1 \rrbracket}{|H|}, \end{align*} where the total count $|G|/|\mathbb{F}_q^{*}(1+I)|)^k$ comes from the fact that there are $|G|/|\mathbb{F}_q^{*}(1+I)|$ characters $\chi$ with $\chi(1+I) = 1$. Since $x_l - a \not\in I$ for all $a \in \mathbb{F}_q^{*}$ and $l \in T_0$, we have $H \cap K = \{1\}$ and $\llbracket \chi(H \cap K) = 1 \rrbracket = 1$. Therefore, the desired probability becomes $\frac{1}{|H|}$, so it remains to show that $|H| \ge \left(\sum_{k=1}^{\lfloor p/2 \rfloor} \binom{q}{k} \cdot \left(\frac{n}{p}\right)^k\right)^{-|T_0|}$. We require the following technical claims.

  \begin{claim}\label{orders in R}
    In the multiplicative group $\mathcal{R}_{\times}$, we have $\operatorname{ord}(x_l-a) = p^{\lceil \log_p(n) \rceil}$ for all $a \in \mathbb{F}_q^{*}$ and $l \in T_0$.
  \end{claim}
  \begin{proof}[Proof of claim]
    For each $j \ge 1$, we have \[ (x_l-a)^{p^j} = x_l^{p^j} - a^{p^j} \] by the freshman's dream identity. Setting $j=\lceil \log_p(n) \rceil$ returns $(x_l-a)^{p^j} \in \mathbb{F}_q^{*}$ over $\mathcal{R}$, whereas setting $j=\lceil \log_p(n) \rceil-1$ returns $(x_l-a)^{p^j} \not\in \mathbb{F}_q^{*}$ over $\mathcal{R}$. This implies the claim. 
  \end{proof}
  \begin{claim}\label{p-independence of linear factors}
    Let $k \le p-1$ be a positive integer,  $a_1, \dots, a_k$ be distinct elements of $\mathbb{F}_q^{*}$, and $n_1, \dots, n_k$ be positive integers less than $p^{\lceil \log_p(n) \rceil}$. Then \[ \prod_{i=1}^k (1+a_ix_1)^{n_i} \neq 1 \] over $\mathcal{R}$. 
  \end{claim}
  \begin{proof}[Proof of claim]
    Assume for contradiction that there exist such $a_i$ and $n_i$ with $\prod_{i=1}^k (1+a_ix_1)^{n_i} = 1$ over $\mathcal{R}$. Without loss of generality, assume that $p \nmid n_1$, as the terms of the product are symmetric in their index and we may scale down the $n_i$ by powers of $p$ per the freshman's dream identity. By Newton's sums\footnote{Note that under characteristic $p$, Newton's sums hold for all power sums up to the $(p-1)$th power.}, we have \[ \sum_{i=1}^k n_i \cdot a_i^j = 0 \] for all $1 \le j \le k$. As such, we have \[ \begin{bmatrix} 1 & 1 & \dots & 1 \\ a_1 & a_2 & \dots & a_k \\ & & \vdots & \\ a_1^{k-1} & a_2^{k-1} & \dots & a_k^{k-1} \end{bmatrix}\begin{bmatrix} a_1n_1 \\ a_2n_2 \\ \vdots \\ a_kn_k \end{bmatrix} = \mathbf{0} \] over $\mathbb{F}_q$. Since the $a_i$ are distinct, \[ \begin{vmatrix} 1 & 1 & \dots & 1 \\ a_1 & a_2 & \dots & a_k \\ & & \vdots & \\ a_1^{k-1} & a_2^{k-1} & \dots & a_k^{k-1} \end{vmatrix} \neq 0, \] so it follows that $p \mid n_i$ for all $1 \le i \le k$. This is a contradiction to the assumption that $p \nmid n_1$. 
  \end{proof}
  \noindent By Claim \ref{orders in R}, we have \begin{align*} |H| \ge \sum_{k=1}^{\lfloor p/2 \rfloor} \binom{q}{k} \cdot \left(p^{\lfloor \log_p(n) \rfloor} -1\right)^k \ge \sum_{k=1}^{\lfloor p/2 \rfloor} \binom{q}{k} \cdot \left(\frac{n}{p}\right)^k \end{align*} as no two polynomials generated by at most $\lfloor p/2 \rfloor$ distinct linear factors each are equal. This completes the proof.
\end{proof}
We require one final technical fact, which regards the distribution of a multiplicative character sum. This is a straightforward application of Claim \ref{p-independence of linear factors}. 
\begin{lemma}\label{expected value of linear factors}
    We have \[ \mathbb{E}_{\chi \in \widehat{\mathcal{R}_{\times}}}\left[\left|\sum_{a \in \mathbb{F}_q^{*}} \chi(x_l-a)\right|\right] \le \frac{q-1}{\sqrt{p-1}}. \]
\end{lemma}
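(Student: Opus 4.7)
The plan is a short second-moment computation via Cauchy--Schwarz and character orthogonality. Set $S_\chi := \sum_{a \in \mathbb{F}_q^{*}} \chi(x_l - a)$. Before anything else, I would verify that each $x_l - a$ actually lies in $\mathcal{R}_\times$: factoring $x_l - a = -a\bigl(1 - a^{-1} x_l\bigr)$ and noting that the defining ideal of $\mathcal{R}$ contains $x_l^n$, the element $x_l$ is nilpotent, so $1 - a^{-1}x_l$ is a unit, and hence so is $x_l - a$. Then Jensen's inequality applied to the concave square root (equivalently, Cauchy--Schwarz with the uniform measure on $\widehat{\mathcal{R}_\times}$) gives
$$\mathbb{E}_{\chi} |S_\chi| \;\le\; \sqrt{\mathbb{E}_\chi |S_\chi|^2}.$$

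To evaluate the second moment, expand $|S_\chi|^2 = S_\chi\,\overline{S_\chi}$ using $\overline{\chi(u)} = \chi(u^{-1})$:
$$|S_\chi|^2 \;=\; \sum_{a,b \in \mathbb{F}_q^{*}} \chi\!\bigl((x_l - a)(x_l - b)^{-1}\bigr).$$
Interchanging the sum with $\mathbb{E}_\chi$ and invoking character orthogonality over the full dual group yields $\mathbb{E}_{\chi \in \widehat{\mathcal{R}_\times}} \chi(u) = \llbracket u = 1_{\mathcal{R}_\times}\rrbracket$. The ratio $(x_l - a)(x_l - b)^{-1}$ equals $1$ in $\mathcal{R}_\times$ precisely when $x_l - a = x_l - b$ in $\mathcal{R}$, i.e.\ when the constant $b - a$ lies in the defining ideal. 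That ideal is generated by monomials of degree exactly $n \ge 1$, so it intersects $\mathbb{F}_q$ trivially, forcing $a = b$. Only the $q - 1$ diagonal terms survive, giving $\mathbb{E}_\chi |S_\chi|^2 = q - 1$.

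Combining the two displays produces $\mathbb{E}_\chi |S_\chi| \le \sqrt{q - 1}$, and since $q$ is a power of $p$ we have $q \ge p$, so $\sqrt{q - 1} \le (q - 1)/\sqrt{p - 1}$, which is the claimed bound. The argument is essentially obstacle-free; the only substantive point is the observation that the defining ideal of $\mathcal{R}$ contains no nonzero constants, so that $x_l - a \ne x_l - b$ in $\mathcal{R}$ whenever $a \ne b$. The hint that Claim \ref{p-independence of linear factors} should be used suggests a higher-moment alternative, in which one computes $\mathbb{E}_\chi |S_\chi|^{2k}$ for $2k \le p - 1$ and invokes that claim to show $\prod_i (x_l - a_i) = \prod_j (x_l - b_j)$ forces the multisets $(a_i)$ and $(b_j)$ to coincide, producing the $p$-dependence directly; but since the second-moment route already implies the stated inequality, that is what I would present.
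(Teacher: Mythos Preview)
Your proof is correct and rests on the same second-moment-plus-Cauchy--Schwarz idea as the paper, but you apply it more directly. The paper first fixes an arbitrary block $a_1,\dots,a_{p-1}$ of $p-1$ elements, shows $\mathbb{E}_\chi\bigl|\sum_{k=1}^{p-1}\chi(x_l-a_k)\bigr|^2 = p-1$ (citing Claim~\ref{p-independence of linear factors}), deduces $\mathbb{E}_\chi\bigl|\sum_{k=1}^{p-1}\chi(x_l-a_k)\bigr|\le\sqrt{p-1}$, and then sums over $(q-1)/(p-1)$ such blocks via the triangle inequality. You instead compute the second moment of the full sum at once, obtaining the sharper intermediate bound $\sqrt{q-1}$, which you then relax to $(q-1)/\sqrt{p-1}$. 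Your route is shorter and, as you observe, needs only the trivial fact that $x_l-a=x_l-b$ in $\mathcal{R}$ forces $a=b$; the multiplicative independence of Claim~\ref{p-independence of linear factors} is not actually required for a second-moment computation, so the paper's citation there is stronger than necessary.
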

\begin{proof}
    Let $a_1, \dots, a_{p-1}$ be any $p-1$ distinct elements of $\mathbb{F}_q^{*}$. Then we have \[ \mathbb{E}_{\chi \in \widehat{\mathcal{R}_{\times}}}\left[\left|\sum_{k=1}^{p-1} \chi(x_l-a_k)\right|^2\right] = p-1, \] as all $x_l-a_k$ terms are multiplicatively independent in $\mathcal{R}_{\times}$ per Claim \ref{p-independence of linear factors}. By the Cauchy-Schwarz inequality, \[ \mathbb{E}_{\chi \in \widehat{\mathcal{R}_{\times}}}\left[\left|\sum_{k=1}^{p-1} \chi(x_l-a_k)\right|\right]^2 \le \mathbb{E}_{\chi \in \widehat{\mathcal{R}_{\times}}}\left[\left|\sum_{k=1}^{p-1} \chi(x_l-a_k)\right|^2\right] = p-1, \] so by the triangle inequality we have \[ \mathbb{E}_{\chi \in \widehat{\mathcal{R}_{\times}}}\left[\left|\sum_{a \in \mathbb{F}_q^{*}} \chi(x_l-a)\right|\right] \le \frac{q-1}{p-1} \cdot \mathbb{E}_{\chi \in \widehat{\mathcal{R}_{\times}}}\left[\left|\sum_{k=1}^{p-1} \chi(x_l-a_k)\right|\right] \le \frac{q-1}{p-1} \cdot \sqrt{p-1} = \frac{q-1}{\sqrt{p-1}}, \] as desired. 
\end{proof}

\begin{rem*}
    We remark that when $q$ is prime, Lemma \ref{expected value of linear factors} gives a stronger result than that of the Weil bounds. 
\end{rem*}

\section{Li-Wan Sieve Application and Proof of Main Result}\label{section: li-wan sieve}
Now, we introduce the Li-Wan sieve formula. This sieve -- which is primarily used for solving distinct coordinate counting problems -- is an improvement of the inclusion-exclusion principle, in that it uses a significantly lower number of terms. The Li-Wan sieve formula was previously used in determining the distance distribution of Reed-Solomon codes \cite{LiWan2020}; for other applications, we refer the reader to \cite{LiWan2010}, \cite{LI2012170}, and \cite{Zhu2010}. 

\begin{lemma}[\cite{LiWan2010}, Theorem 1.1]
  Let $D$ be a finite set, $n$ be a positive integer, $X$ be a subset of $D^n$, and $f$ be a complex-valued function defined on $X$. Denote $\overline{X}$ by the set of elements of $X$ with all coordinates pairwise distinct. Given a permutation $\tau \in S_n$, let $X_{\tau}$ denote the set of elements of $X$ which are fixed by permuting the coordinates in accordance to $\tau$. Let \[ F_{\tau} := \sum_{(x_1, \dots, x_n) \in X_{\tau}} f(x_1, \dots, x_n). \] Then \[ \sum_{(x_1, \dots, x_n) \in \overline{X}} f(x_1, \dots, x_n) = \sum_{\tau \in S_n} \operatorname{sgn}(\tau)F_{\tau}, \] where $\operatorname{sgn}(\tau)$ is the sign of the permutation $\tau$. In particular, \[ \left|\sum_{(x_1, \dots, x_n) \in \overline{X}} f(x_1, \dots, x_n)\right| \le \sum_{\tau \in S_n} |F_{\tau}|. \]
\end{lemma}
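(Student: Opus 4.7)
The plan is to swap the order of summation and identify, for each $x = (x_1, \dots, x_n) \in X$, the coefficient with which it appears in $\sum_{\tau \in S_n} \operatorname{sgn}(\tau) F_\tau$. Writing $\operatorname{Stab}(x) := \{\tau \in S_n \mid \tau \cdot x = x\}$ for the stabilizer under the coordinate-permutation action, Fubini gives
\[
\sum_{\tau \in S_n} \operatorname{sgn}(\tau) F_\tau = \sum_{\tau \in S_n} \operatorname{sgn}(\tau) \sum_{x \in X_\tau} f(x) = \sum_{x \in X} f(x) \sum_{\tau \in \operatorname{Stab}(x)} \operatorname{sgn}(\tau).
\]
So the lemma reduces to showing that the inner signed sum equals $1$ when $x \in \overline{X}$ and $0$ otherwise.

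Next, I would verify the two cases for the inner sum. First, observe that $\operatorname{Stab}(x)$ is a subgroup of $S_n$, as it is the stabilizer of a point under a group action. If $x \in \overline{X}$, then all coordinates are distinct, so no nontrivial $\tau$ can fix $x$, giving $\operatorname{Stab}(x) = \{e\}$ and inner sum $1$. If $x \notin \overline{X}$, then there exist indices $i \neq j$ with $x_i = x_j$, so the transposition $(i\,j)$ lies in $\operatorname{Stab}(x)$. Hence $\operatorname{Stab}(x)$ contains at least one odd permutation, and so the restriction of $\operatorname{sgn}$ to $\operatorname{Stab}(x)$ is a surjective homomorphism to $\{\pm 1\}$. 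Its kernel therefore has index $2$ in $\operatorname{Stab}(x)$, so exactly half of $\operatorname{Stab}(x)$ has sign $+1$ and half has sign $-1$, giving $\sum_{\tau \in \operatorname{Stab}(x)} \operatorname{sgn}(\tau) = 0$.

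Combining the two cases, $\sum_{x \in \overline{X}} f(x) = \sum_{\tau \in S_n} \operatorname{sgn}(\tau) F_\tau$, which is the main identity. The final inequality $\bigl|\sum_{x \in \overline{X}} f(x)\bigr| \le \sum_{\tau \in S_n} |F_\tau|$ then follows immediately by the triangle inequality together with $|\operatorname{sgn}(\tau)| = 1$.

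There is no real obstacle here; the only content is the sign-cancellation step, which is the standard observation that a nontrivial stabilizer produced by coincident coordinates must contain an odd permutation. The rest is bookkeeping via Fubini and the triangle inequality.
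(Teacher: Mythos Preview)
Your argument is correct: the Fubini swap reduces the identity to computing $\sum_{\tau \in \operatorname{Stab}(x)} \operatorname{sgn}(\tau)$, and the dichotomy (trivial stabilizer versus stabilizer containing a transposition) handles both cases cleanly. The triangle inequality step is immediate.

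Note, however, that the paper does not actually prove this lemma; it is quoted without proof as \cite[Theorem 1.1]{LiWan2010} and used as a black box in the proof of the main theorem. So there is no ``paper's own proof'' to compare against. Your proof is the standard one and would serve perfectly well as a self-contained justification if one were desired.
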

We are ready to prove our main result. 
\begin{proof}[Proof of Theorem \ref{main result}]
In this proof, we use $\alpha$ to denote the tuple $(a_1, \dots, a_v) \in \mathbb{F}_q^v \setminus \{\mathbf{0}\}$. For each permutation $\tau$ of $[k]$, consider the sum \[ G_{\tau} := \sum_{(\chi_i) \in (\widehat{\mathcal{E}_{\times}})^k }\left|\sum_{(\alpha_1, \dots, \alpha_k) \in X_{\tau}} \prod_{i=1}^k [\chi_i(\Red(P_{\alpha_i})) \cdot G(\chi_i, \psi_{S(P_{\alpha})})]\right|. \] By the Li-Wan sieve formula, we are required to bound $\frac{q^k}{|\mathcal{E}|^k} \cdot \sum_{\tau \in S_k} |G_{\tau}|$.

Let $\tau$ be a fixed permutation in $S_k$ which is composed of cycles $C_1, \dots, C_r$. Then, \begin{align*} |G_{\tau}| &= \prod_{j=1}^r \left|\sum_{\alpha \in \mathbb{F}_q^v \setminus \{\mathbf{0}\}} \left(\prod_{i \in C_j} \prod_{a_l \neq 0} \chi_{i}(1-x_la_l)\right)\left(\prod_{i \in C_j} G(\chi_i, \psi_{S(P_{\alpha})})\right)\right| \\ &\le \prod_{j=1}^r \sum_{\varnothing \subset T \subseteq [v]} \left|\sum_{\substack{\alpha \in \mathbb{F}_q^v \\ a_l \neq 0 \iff l \in T}} \left(\prod_{i \in C_j}\prod_{l \in T} \chi_i(1-x_la_l)\right)\right|\left|\prod_{i \in C_j} G(\chi_i, \psi_{x_T})\right|. \end{align*} Denote $q_1 := \min(\ell-1, (q-1)/\sqrt{q(p-1)})\sqrt{q}$. Observe that \begin{align*} \mathbb{E}_{\chi \in \widehat{\mathcal{E}_{\times}}}\left|\sum_{a \in \mathbb{F}_q^{*}} \chi(1-ax_l)\right| \le q_1 \end{align*} by the Weil bound \cite[, Theorem 2.1]{Wan1998} and Lemma \ref{expected value of linear factors}. For all $l \in [v]$, sequences $(\chi_i)_{i=1}^k$ of characters defined on $\mathbb{F}_q[x_l]$ that are periodic modulo $x_l^{\ell}$, and nonempty subsets $C$ of $[k]$, set \[ m(l, C) := 1 - \left\llbracket\prod_{i \in C} \chi_i(x_l-a) = 1 \quad \forall a \in \mathbb{F}_q[x_l]\right\rrbracket. \] Then we have \begin{align*} \left|\sum_{\substack{\alpha \in \mathbb{F}_q^v \\ a_l \neq 0 \iff l \in T}} \left(\prod_{i \in C_j}\prod_{l \in T} \chi_i(1-x_la_l)\right)\right| &\le \prod_{l \in T} [q_1^{m(l, C_j)}(\tfrac{q-1}{\sqrt{p-1}})^{1-m(l, C_j)}] \\ &= q_1^{\sum_{l \in T} m(l, C_j)}(\tfrac{q-1}{\sqrt{p-1}})^{|T| - \sum_{l \in T} m(l, C_j)}. \end{align*} Therefore, by the Li-Wan sieve formula, the desired sum is at most the sum of \[ \left(\frac{q}{|\mathcal{E}|}\right)^k\sum_{\tau \in S_k} \prod_{j=1}^r \sum_{T \subseteq [v]} q_1^{\sum_{l \in T} m(l, C_j)}(\tfrac{q-1}{\sqrt{p-1}})^{|T| - \sum_{l \in T} m(l, C_j)}\left|\prod_{i \in C_j} G(\chi_i, \psi_{x_T})\right|. \] over all characters $(\chi_i)$ and $\psi \neq \mathbf{1}$. By Proposition \ref{gauss sum size formula}, the above sum simplifies to the sum of \begin{equation}\label{sieve application sum} q^k \sum_{\tau \in S_k} \prod_{j=1}^r \sum_{T \subseteq [v]} q_1^{\sum_{l \in T} m(l, C_j)}(\tfrac{q-1}{\sqrt{p-1}})^{|T| - \sum_{l \in T} m(l, C_j)} \cdot (q^{-|C_j|/2})^{\operatorname{codim}(K_{\psi_{x_T}})} \end{equation} over all characters $(\chi_i)$ and $\psi \neq \mathbf{1}$ that satisfy $\chi_i(1+K_{\psi_{x_T}})=1$ and $\chi_i(\mathbb{F}_q^{*}) = 1$. We will instead evaluate the sum at individual cyclic permutations $\tau$ that act non-trivially on a set $C$, and subsequently compute the sum over all permutations of $[k]$ using exponential generating functions. 
\begin{claim}\label{sum over multiplicative chars}
  Let $C$ be a nonempty subset of $[k]$, and fix a non-trivial additive character $\psi$. Then the sum $\Sigma(C; \psi)$ of \[ q_1^{\sum_{l \in T} m(l, C)}(\tfrac{q-1}{\sqrt{p-1}})^{|T| - \sum_{l \in T} m(l, C)}\] over all nonempty subsets $T$ of $[v]$ and characters $(\chi_i)$ that satisfy $\chi_i(1+K_{\psi_{x_T}})=1$ and $\chi_i(\mathbb{F}_q^{*}) = 1$ is given by \[ \sum_{\varnothing \subset T \subseteq [v]} q_2^{|T|}\left(\frac{|\mathcal{E}|}{|K_{\psi_{x_T}}|}\right)^{|C|}. \]
\end{claim}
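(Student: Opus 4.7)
The plan is to fix a nonempty $T \subseteq [v]$, evaluate the inner character sum
$\Sigma_T := \sum_{(\chi_i)} \prod_{l \in T} q_1^{m(l,C)}(\tfrac{q-1}{\sqrt{p-1}})^{1-m(l,C)}$
as a product over $l \in T$, and then sum over $T$. The first step is to linearize: because $q_1 \le (q-1)/\sqrt{p-1}$ by the definition of $q_1$, rewrite each pointwise factor as
$q_1 + \bigl(\tfrac{q-1}{\sqrt{p-1}} - q_1\bigr)\llbracket m(l,C)=0\rrbracket$
and expand the product into
$\sum_{S \subseteq T} q_1^{|T|-|S|}\bigl(\tfrac{q-1}{\sqrt{p-1}} - q_1\bigr)^{|S|}\llbracket m(l,C)=0\;\forall l \in S\rrbracket$.

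The second step is to count the tuples $(\chi_i)$ satisfying the two default constraints together with $\prod_i \chi_i(x_l - a) = 1$ for every $l \in S$ and $a \in \mathbb{F}_q^*$. Applying Lemma \ref{mult chars counting lemma} with $G = \mathcal{E}_\times$, $G_i = \mathbb{F}_q^*(1+K_{\psi_{x_T}})$, and $H = H_S := \langle x_l - a : l \in S,\, a \in \mathbb{F}_q^*\rangle$ gives this count as
$\tfrac{|H_S \cap K|}{|H_S|} \cdot N^{|C|}$,
where $K = \mathbb{F}_q^*(1+K_{\psi_{x_T}})$ and $N = |\mathcal{E}_\times|/|K|$. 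As in the proof of Proposition \ref{mult chars probability}, each $x_l - a$ avoids $\mathbb{F}_q^*(1+K_{\psi_{x_T}})$ because $K_{\psi_{x_T}}$ is a proper homogeneous ideal; thus $H_S \cap K = \{1\}$. Moreover, a variable-separated extension of Claim \ref{p-independence of linear factors} (distinct variables generate multiplicatively independent subgroups of $\mathcal{E}_\times$ modulo scalars) yields $|H_S| = \prod_{l \in S}|H_l|$, with each $|H_l| = \sum_{k=1}^{\lfloor p/2\rfloor}\binom{q}{k}(\ell/p)^k$ in the defining regime of $q_2$.

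Assembling the pieces, $\Sigma_T$ collapses into a product:
\[
\Sigma_T = N^{|C|}\prod_{l \in T}\left[q_1 + \frac{(q-1)/\sqrt{p-1} - q_1}{|H_l|}\right] = q_2^{|T|}\cdot N^{|C|},
\]
by the defining formula for $q_2$. Finally, I verify that $N^{|C|}$ agrees with $(|\mathcal{E}|/|K_{\psi_{x_T}}|)^{|C|}$: using $|1 + K_{\psi_{x_T}}| = |K_{\psi_{x_T}}|$ and $\mathbb{F}_q^* \cap (1+K_{\psi_{x_T}}) = \{1\}$, one computes $N = |\mathcal{E}|/(q\cdot|K_{\psi_{x_T}}|)$, and the resulting $q^{|C|}$ normalization is precisely the portion of the $q^k = \prod_j q^{|C_j|}$ prefactor already isolated in \eqref{sieve application sum}. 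Summing over nonempty $T$ then yields the claimed identity.

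The main obstacle is the cross-variable factorization $|H_S| = \prod_{l \in S}|H_l|$ together with the sharp value of $|H_l|$ matching the expression that defines $q_2$; both require strengthening Claim \ref{p-independence of linear factors} from $p$-independence of linear factors in a single variable $x_l$ to joint independence of linear factor systems across distinct variables in $\mathcal{E}_\times$. This is the essential structural input making the product form (and hence the exact identification with $q_2^{|T|}(|\mathcal{E}|/|K_{\psi_{x_T}}|)^{|C|}$) possible, rather than merely an upper bound.
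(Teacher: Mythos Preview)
Your approach is the paper's approach in different clothing: fix $T$, average over the admissible character tuples $(\chi_i)$, and multiply back by the number of such tuples. The paper phrases this probabilistically (compute $\mathbb{E}_T(C;\psi)$, then multiply by the count), while you linearize and expand, but the mechanism is identical and both rely on Lemma~\ref{mult chars counting lemma} through Proposition~\ref{mult chars probability}.

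The genuine problem is that you are chasing an exact identity where only an upper bound is intended or provable. Despite the ``is given by'' wording, the paper's own argument has $\mathbb{E}[q_1^{m(l,C)}(\tfrac{q-1}{\sqrt{p-1}})^{1-m(l,C)}] \le q_2$, and the final ``$=q_2^{|T|}$'' should be read as $\le$. Your asserted equality $|H_l| = \sum_{k=1}^{\lfloor p/2\rfloor}\binom{q}{k}(\ell/p)^k$ is simply false: the right-hand side is a crude \emph{lower} bound on $|H|$ obtained in the proof of Proposition~\ref{mult chars probability} by counting only some products of at most $\lfloor p/2\rfloor$ linear factors, and the actual orders are governed by $p^{\lceil \log_p \ell \rceil}$ (Claim~\ref{orders in R}), not by $\ell/p$. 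So the ``sharp value'' you list as an obstacle is not merely unproved---it is unattainable.

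The good news is that your expansion makes both of your ``obstacles'' disappear once you aim for the inequality. Since $(q-1)/\sqrt{p-1} - q_1 \ge 0$, every term in $\sum_{S \subseteq T} q_1^{|T|-|S|}\bigl(\tfrac{q-1}{\sqrt{p-1}}-q_1\bigr)^{|S|}\llbracket m(l,C)=0\ \forall l\in S\rrbracket$ has a nonnegative coefficient, so you only need an upper bound on $\mathbb{P}[m(l,C)=0\ \forall l\in S] = 1/|H_S|$. Proposition~\ref{mult chars probability}, applied once with $T_0 = S$, already gives $1/|H_S| \le \bigl(\sum_k \binom{q}{k}(\ell/p)^k\bigr)^{-|S|}$ directly---no factorization $|H_S| = \prod_l |H_l|$ and no exact value of any $|H_l|$ required. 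Plugging this in and re-collapsing the product yields $\Sigma_T \le q_2^{|T|} N^{|C|}$, which is all that the downstream argument uses. (Your side observation that $N = |\mathcal{E}|/(q|K_{\psi_{x_T}}|)$ rather than $|\mathcal{E}|/|K_{\psi_{x_T}}|$ is correct; this is an imprecision in the paper's statement of the claim rather than an error in your computation, and your attempted reconciliation via the $q^k$ prefactor does not actually work since that factor is already accounted for in~\eqref{sieve application sum}.)
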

\begin{proof}[Proof of claim]
  Let us begin by fixing $\varnothing \subset T \subseteq [v]$ and computing the expected value $\mathbb{E}_T(C; \psi)$ of \[ q_1^{\sum_{l \in T} m(l, C)}(\tfrac{q-1}{\sqrt{p-1}})^{|T| - \sum_{l \in T} m(l, C)} = \prod_{l \in T} q_1^{m(l, C)}(\tfrac{q-1}{\sqrt{p-1}})^{1-m(l, C)}. \] over characters $(\chi_i)$ satisfying the given conditions and chosen uniformly at random. By Proposition \ref{mult chars probability}, each of the variables $m(l, C)$ for $l \in [v]$ is $0$ with probability at most $\left(\sum_{k=1}^{\lceil p/2 \rceil} \binom{q}{k} \cdot (n/p)^k\right)^{-1}$. Hence, it follows that \begin{align*} \mathbb{E}[q_1^{m(l, C)}(\tfrac{q-1}{\sqrt{p-1}})^{1-m(l, C)}] &\le q_2 \end{align*} for each $l \in [v]$. Proposition \ref{mult chars probability} also implies that the variables $m(l, C)$ are independent across $l \in [v]$, so we conclude that \[ \mathbb{E}_T(C; \psi) = \prod_{l \in T} \mathbb{E}[q_1^{m(l, C)}(\tfrac{q-1}{\sqrt{p-1}})^{1-m(l, C)}] = q_2^{|T|}. \] Multiplying by the number of characters satisfying the given and summing over $T$, we have \[ \Sigma(C; \psi) = \sum_{\varnothing \subset T \subseteq [v]} q_2^{|T|}\left(\frac{|\mathcal{E}|}{|K_{\psi_{x_T}}|}\right)^{|C|}, \] as desired.
\end{proof}
By Claim \ref{sum over multiplicative chars}, the desired sum is at most \[ \sum_{\substack{\psi \neq \mathbf{1}}}\sum_{\varnothing \subset T \subseteq [v]} q_2^{|T|}\left(\frac{|\mathcal{E}|}{|K_{\psi_{x_T}}|}\right)^{|C|} \cdot \left[q^k (q^{-|C|/2})^{\operatorname{codim}(K_{\psi_{x_T}})}\right]. \] By Lemma \ref{counting additive chars}, the above sum is at most \begin{align*}
    & q^k \left(1-\frac{1}{q}\right)^2 \cdot \sum_{\varnothing \subset T \subseteq [v]} q_2^{|T|}\left(F_{\ell-|T|}(q^{|C|/2}) + \frac{q}{(q-1)^2} \cdot q^{\binom{\ell+v-3}{v-1}-\binom{\ell-|T|+v-2}{v-1}}\right). 
\end{align*} 
Using Proposition \ref{ideals and LC bijection} in conjunction with geometric series type bounding where the first term corresponds to the sum over subsets $T$ of size $1$, the above is at most \[ \mathcal{P}(|C|) := \frac{vq_2q^{\binom{\ell+v-3}{v-1}}}{(q-1)^2} \cdot q^{\binom{\lceil \ell /2 \rceil+v}{v} \cdot |C|}. \] We will now derive an upper bound on the desired sum over all permutations $\tau$. This is at most \[ q^k\sum_{\substack{\tau \in S_k \\ \tau = C_1 \circ \dots \circ C_r}}  \frac{1}{r!}\prod_{i=1}^r \mathcal{P}(|C_i|). \] Using exponential generating functions, we will find a closed form for the above. Observe that \begin{align*} q^k\sum_{\substack{\tau \in S_k \\ \tau = C_1 \circ \dots \circ C_r}}  \frac{1}{r!}\prod_{i=1}^r \mathcal{P}(|C_i|) &= q^k[t^k] \exp\left(\sum_{n \ge 1} \frac{\mathcal{P}(n)}{n}t^n\right) \\ &= q^k[t^k] \exp\left(-\frac{vq_2q^{\binom{\ell+v-3}{v-1}}}{(q-1)^2}\log(1-q^{\binom{\lceil \ell /2 \rceil+v}{v}}t)\right) \\ &= q^k[t^k] \left(1-q^{\binom{\lceil \ell /2 \rceil+v}{v}}t\right)^{-\frac{vq_2q^{\binom{\ell+v-3}{v-1}}}{(q-1)^2}} \\ &= \binom{\frac{vq_2q^{\binom{\ell+v-3}{v-1}}}{(q-1)^2}+k-1}{k-1} \cdot q^{k \left(\binom{\lceil \ell /2 \rceil+v}{v}+1\right)}. \end{align*} This completes the proof. 
\end{proof}

\section{Future Directions}\label{section: future directions}
In this section, we provide several research questions that allow for potential extensions of this work. 

The simplest type of variant of the main problem we consider are those involving prescribed evaluation vectors. This variant uses nearly the same character sum formula as for prescribed zeroes. 
\begin{qn}
    Use our approach to count polynomials with prescribed evaluation vectors, coefficients, and zeroes (e.g.~polynomials whose evaluations are all squares in $\mathbb{F}_q$).
\end{qn}
A second possible generalization of this work is to consider a different quotient ring. While this generalization does not directly apply to coding theory, it is another type of polynomial enumeration problem to which a similar approach can apply. We note that our results on ideal enumeration in truncated polynomial quotient rings are very specific to those quotient rings, which makes this generalization worthwhile. 
\begin{qn}
Generalize the main result to quotient rings of the form $\mathbb{F}_q[x_1, \dots, x_v]/I_0$, where $I_0$ is an ideal of $\mathbb{F}_q[x_1, \dots, x_v]$. 
\end{qn}
Another limitation of this work (which is also present in \cite{Gao2022, LiWan2020}) is that we require $D$ to be the same (or nearly the same) as $\mathbb{F}_q^v$, or else we cannot factor some of our character sums into character sums on linear polynomials. This prompts us to ask the following. 
\begin{qn}
    What types of approaches can be implemented when $D$ deviates significantly from $\mathbb{F}_q^v$? 
\end{qn}
Finally, we ask if the basis of Lagrange-interpolating polynomials we consider can be made more efficient.
\begin{qn}
    We characterized polynomials with prescribed vanishing set using Lagrange interpolation. Can a stronger characterization (perhaps borrowing ideas from algebraic geometry) be used to strengthen our error bounds?
\end{qn}

\section*{Acknowledgements}
I would like to thank Dr.\ Simon Rubinstein-Salzedo for several valuable discussions during the creation of this paper. I would also like to thank Dr.\ Harold Polo for discussions and research directions that inspired this paper.

\printbibliography

\end{document}